\documentclass[11pt]{amsproc}
\usepackage[margin=1in]{geometry}
\usepackage{setspace,fullpage}
\usepackage{graphicx}
\usepackage[nice]{nicefrac}
\usepackage{amssymb}
\usepackage{multirow}
\usepackage{array}
\usepackage{amsmath,amsthm,amscd,amssymb, mathrsfs}
\usepackage{latexsym}
\usepackage[colorlinks=true,citecolor=blue,linkcolor=blue,urlcolor=blue]{hyperref}

\numberwithin{equation}{section}

\theoremstyle{plain}
\newtheorem{theorem}{Theorem}[section]
\newtheorem{lemma}[theorem]{Lemma}
\newtheorem{corollary}[theorem]{Corollary}
\newtheorem{proposition}[theorem]{Proposition}

\theoremstyle{definition}

\theoremstyle{remark}

\newcommand{\F}{\mathbb F}
\newcommand{\C}{\mathbb C}
\newcommand{\T}{\mathbb T}
\newcommand{\Z}{\mathbb Z}
\newcommand{\Rcal}{\mathcal R}
\newcommand{\Tcal}{\mathcal T}
\newcommand{\Bcal}{\mathcal B}
\newcommand{\Qcal}{\mathcal Q}
\newcommand{\eps}{\varepsilon}

\title{A bilinear approach to the finite field restriction problem, II}
\author{Mark Lewko}
\address{Lebanon, New Hampshire USA}
\email{mlewko@gmail.com}

\begin{document}

\begin{abstract}
Let $P_3$ denote the three-dimensional paraboloid over a finite field of prime order in which $-1$ is not a square. We prove that the Fourier extension operator associated with $P_3$ maps $L^2$ to $L^r$ for $r>\frac{176}{51}=3.45098\ldots$. The argument combines the author's bilinear approach to the problem with point-line incidence estimates. We also prove that the extension operator associated with the paraboloid $P_6$ in six dimensions maps $L^2$ to $L^{8/3}$. This was previously known up to but not including the endpoint, and is the sharp $L^2$ estimate in six dimensions. Finally we observe that the endpoint restriction conjecture for $P_3$ in finite fields implies that the integer lattice points on the $3$-d Euclidean paraboloid are a $\Lambda(3)$ set.
\end{abstract}

\maketitle

\section{Introduction}

The Fourier restriction problem over finite fields was introduced by Mockenhaupt and Tao \cite{MT} and has since been studied extensively, with connections to additive combinatorics, incidence geometry, Kakeya phenomena, the theory of quadratic forms, and explicit extractor constructions, among other topics. We refer the reader to \cite{LewK} and \cite{MT} for an introduction to the problem, and to \cite{IK09,IKL,LewRect,LewN,RS,SD} for some of the more recent developments.

Let $F$ be a finite field of odd characteristic, with $q=|F|$, and let
\[
P_d=\{(\underline x,\underline x\cdot \underline x):\underline x\in F^{d-1}\}\subset F^d
\]
denote the $d$-dimensional paraboloid, endowed with the normalized counting measure $d\sigma$ that assigns mass $|P_d|^{-1}=q^{-(d-1)}$ to each point. For $f:P_d\to\C$ the Fourier extension operator is given by
\[
(f d\sigma)^\vee(x)=q^{-(d-1)}\sum_{\xi\in P_d}f(\xi)\,e(x\cdot \xi),\qquad x\in F^d,
\]
where $e$ is a fixed nontrivial additive character of $F$. This work will focus entirely on the $L^2$ theory.  We write $R^*(2\to r)$ for the best constant in
\[
\|(f d\sigma)^\vee\|_{L^r(F^d)}\le R^*(2\to r)\,\|f\|_{L^2(P_d,d\sigma)},
\]
with the implied constant required to be independent of $q$. By duality this is equivalent to the inequality $\|\widehat g\|_{L^2(P_d,d\sigma)}\lesssim\|g\|_{L^{r'}(F^d)}$, with $r'=\frac{r}{r-1}$.

We begin with the three-dimensional paraboloid. The Stein--Tomas method gives $R^*(2\to r)\lesssim1$ for $r\ge4$, which is the optimal $L^2$ range over an arbitrary finite field. When $-1$ is not a square in $F$ the surface contains no lines, and one expects a larger range; in this setting Mockenhaupt and Tao \cite{MT} proved $R^*(2\to r)\lesssim1$ for $r>\frac{18}{5}=3.6$ and conjectured that the inequality holds for every $r\ge3$. Their range was improved to $r>\frac{18}{5}-\delta$, for some small $\delta>0$, in arbitrary fields of odd characteristic, and to $r\ge\frac{745}{207}\approx3.599$ in prime fields, by the author \cite{LewN}. In prime fields the range was subsequently lowered to $r>\frac{68}{19}\approx3.579$ by Stevens and de Zeeuw \cite{SD}, to $r\ge\frac{32}{9}\approx3.555$ by Rudnev and Shkredov \cite{RS}, and to $r>\frac{188}{53}\approx3.547$ by the author \cite{LewRect}. In arbitrary fields of odd characteristic in which $-1$ is not a square, the first paper in this series \cite{LewBilinear} gives $r>\frac{32}{9}=3.555\ldots$.\footnote{The published version of \cite{LewBilinear} stated the larger range $r>\frac{24}{7}$, owing to an error in the final dominance calculation, where a term of the form $q^{-3/16}|G|^{13/16}$ was taken to be dominated by $|G|^{17/24}$ throughout the relevant range, whereas this holds only for $|G|\le q^{9/5}$; the worst case is at $|G|\sim q^2$ and gives $r>\frac{32}{9}$. This is discussed in an erratum accompanying the revised version of \cite{LewBilinear} on the arXiv, whose estimates we use here. That paper also remarked that one of the terms could be lowered using incidence estimates but, believing that term not to be dominant, stated this would not improve the main result; it is in fact the dominant term, and improving the term and the restriction estimate is the purpose of Section \ref{sec:prime}.}

These improvements have followed two distinct paths. Beginning from the Mockenhaupt--Tao exponent $r>\frac{18}{5}$, nearly all of the progress has come from sharpening the additive-energy input in the Mockenhaupt--Tao argument. That input is bounded using point-line incidence estimates, and ultimately Rudnev's point-plane incidence theorem \cite{R}, which in turn rests on a result of Koll\'ar \cite{Kol} in algebraic geometry. Incidence bounds of this type are sensitive to the presence of subfields, so the resulting restriction estimates have been confined to prime fields; see the survey \cite{Rsur}. The second and more recent idea is the bilinear approach of \cite{LewBilinear}, which replaces the Mockenhaupt--Tao argument by an estimate for a related bilinear operator, of which the Mockenhaupt--Tao argument is essentially the diagonal. This approach has the advantage of working in arbitrary fields of odd characteristic. Here we combine the two approaches, to prove the following:

\begin{theorem}\label{thm:prime-main}
Let $F=\F_p$ be a field of prime order in which $-1$ is not a square, and let $P_3=\{(\underline x,\underline x\cdot \underline x):\underline x\in F^{2}\}$. Then $R^*(2\to r)\lesssim_r1$ for every
\[
r>\frac{176}{51}=3.45098\ldots .
\]
\end{theorem}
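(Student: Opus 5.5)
We follow the bilinear framework of \cite{LewBilinear}, in its corrected form, and feed one step with point-line incidence bounds valid over $\F_p$. By duality and the standard reduction it suffices to prove a restricted-type estimate. We bound $\|\widehat{1_G}\|_{L^2(P_3,d\sigma)}$ for sets $G\subseteq F^3$, uniformly in $|G|$, and then interpolate, which is harmless because $r$ is an open range. Squaring and expanding, $\|\widehat{1_G}\|_{L^2(d\sigma)}^2=\langle 1_G*\widehat{d\sigma},1_G\rangle$. Since $\widehat{d\sigma}$ is essentially supported where the ``last coordinate'' Gauss sum is nontrivial, this becomes a bilinear sum over pairs $(x,y)\in G\times G$ weighted by a quadratic character of $(x-y)$. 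Following \cite{LewBilinear}, we decompose $G$ into its intersections with translates of planes or lines, or equivalently partition the pairs according to the direction $x-y$. The sum then splits into a diagonal part, which is essentially the Mockenhaupt--Tao energy term, and an off-diagonal bilinear part. The bilinear part is controlled in arbitrary fields by the estimates of \cite{LewBilinear}, and we take those bounds as given. Each resulting term has the shape $q^{-a}|G|^{b}$, and $R^*(2\to r)\lesssim1$ amounts to every term being $\lesssim |G|^{2/r'}$ for $1\le|G|\le q^3$.

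The new input concerns the term identified in the footnote as dominant near $|G|\sim q^2$. This term counts coincidences of the form: points of $G$ lying on lines parallel to directions determined by the null cone, or equivalently certain additive quadruples on $P_3$. In prime fields we replace the trivial (Cauchy--Schwarz) count of these incidences by a Stevens--de Zeeuw point-line bound, $I(P,L)\lesssim |P|^{11/15}|L|^{11/15}$ when $|P|^{7/8}<|L|<|P|^{8/7}$ and $|P|^{-2}|L|^{13}\lesssim p^{15}$. Alternatively, via the usual lifting, we use Rudnev's point-plane theorem \cite{R}. We would set up the point and line sets from the fibres of $G$ over a coordinate plane, check the size constraints relative to $p$ in the range of $|G|$ where this term matters, and obtain a new bound $q^{-a'}|G|^{b'}$ with a smaller $|G|$-exponent.

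Finally we redo the dominance calculation. We list all terms, namely the unchanged ones from \cite{LewBilinear} and the improved incidence term, and for each we find the worst $|G|$. As the footnote warns, this is typically at an endpoint or at $|G|\sim q^2$. Requiring each term to be $\le |G|^{2/r'}$ gives a constraint on $r$, and the binding constraint should produce $r>176/51$. We expect the main obstacle to be the incidence step. The point sets must satisfy the non-degeneracy and size hypotheses of the prime-field incidence theorem, for instance $|P|\lesssim p^{?}$ and no excessive collinearity. Where they fail, for very dense fibres or when $|G|$ exceeds roughly $q^{2}$, one must fall back on the trivial bound or the earlier estimate and check that it is not dominant there. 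Balancing these regimes is what fixes the exponent $176/51$.
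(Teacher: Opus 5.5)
Your proposal has a genuine gap. The one new step is left as ``set up the point and line sets \ldots\ and obtain a new bound $q^{-a'}|G|^{b'}$'', and $\frac{176}{51}$ is then asserted rather than computed. The target term is also misidentified. The dominant term $q^{-3/16}|G|^{13/16}$ has nothing to do with null directions; there are none in $\F_p^2$ when $-1$ is not a square. Nor is it an additive-quadruple count on $P_3$: that is the rectangle count $\Rcal(G_z)$, which enters the linear term. It is the $m^{7/2}$ summand of Proposition~\ref{prop:general-B}. That summand comes from the trapezoid count $\Tcal(G_z,G_{z'})=\sum_\theta E_\theta(G_z)E_\theta(G_{z'})$ of parallel differences in two distinct horizontal slices.

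The paper controls this through the directional energy $\sum_\theta E_\theta(A)^2$. The decisive input is the \emph{direction-sensitive} Lund--Pham--Vinh bound, whose main term $t^{2/9}L^{5/9}m^{7/9}$ exploits the number $t$ of directions spanned. A class of $j$-rich lines with $s$ lines in each of $t$ directions then contributes $ts^2j^4\lesssim m^{7/2}j^{-1/2}$. Vinh's bound also covers the regime $t\gtrsim p^2/m$, and lines with at most $\min(m,p)^{1/3}$ points are handled trivially. Together these give, up to $p^\eps$, the bound $m^3\min(m,p)^{1/3}+m^4/p+pm^2$. A direction-blind bound such as the Stevens--de Zeeuw estimate you quote (or Rudnev's point--plane theorem behind it) gives only $ts^2j^4\lesssim m^{15/4}j^{-3/4}$ in the same argument, hence $m^3\min(m,p)^{3/7}$. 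At $N=p^2$, $w=m=p$, even after taking the minimum with the available rectangle bounds, this leaves $N^{117/164}$, and $\frac{117}{164}>\frac{125}{176}$. You would also need the frame/non-frame/mixed splitting of Proposition~\ref{prop:prime-B}. Frame lines carry $\sim m/k\ge m^{1/2}$ points, so the energy lemma does not apply to them. Their $\lesssim m^2/k$ pairs are affordable only for $k\ge k_*=m^{2/3}p^{-1/3}$; for $k\le k_*$ one must use the rectangle side $p(\Rcal(A)\Rcal(B))^{1/2}\lesssim pm^2k$.

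Even with the paper's trapezoid bound in hand, improving that one term does not give $\frac{176}{51}$. The summand $m^3p^{1/3}$ contributes up to $p^{-1/12}N^{3/4}$, which exceeds $N^{17/24}$ for $N>p^2$. Taking the minimum with \eqref{eq:LewRect-prime-table}, the same bookkeeping only reaches the exponent $\frac{219}{308}$, i.e.\ $r>\frac{308}{89}$. The second missing ingredient is the hybrid rectangle estimate, Lemma~\ref{lem:hybrid-rectangle}: $\Rcal(A)\lesssim_\eps p^\eps n^2p^{3/7}$ for $p\le n\le p^{10/7}$. It comes from feeding the Stevens--de Zeeuw and Vinh rich-line bounds into the dyadic rectangle count; this is where Stevens--de Zeeuw actually enters. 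On the rectangle side of $\Bcal$ it yields $\min\{m^3p^{1/3},m^2p^{10/7}\}$, whose entries balance at $m=p^{23/21}$. With $w=p$ this is $N=p^{44/21}$, where $p^{-1/12}N^{3/4}=p^{5/28}N^{5/8}=N^{125/176}$. Three things produce $r>\frac{176}{51}$: that balance, the check that every other entry of the table is at most $N^{125/176}$, and Lemma~\ref{lem:eps-removal} with $s_0=\frac{176}{125}$. Your plan contains neither of the two new bounds, so as written it cannot reach the stated exponent.
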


Our second result concerns the paraboloid in six dimensions. In \cite{IKL}, Iosevich, Koh and the author established the optimal $L^2$ restriction estimate for $P_d$ in every even dimension $d\ge8$, and in dimension six proved $R^*\big(2\to\frac83+\eps\big)\lesssim1$ for every $\eps>0$, which is sharp apart from the endpoint. Here we obtain the endpoint.

\begin{theorem}\label{thm:six-main}
Let $F$ be a finite field of odd characteristic and let $P_6=\{(\xi,\xi\cdot \xi):\xi\in F^5\}\subset F^6$. Then
\[
R^*\Big(2\to\frac83\Big)\lesssim1,\qquad\text{equivalently}\qquad
\|(f d\sigma)^\vee\|_{L^{8/3}(F^6)}\lesssim\|f\|_{L^2(P_6,d\sigma)}.
\]
\end{theorem}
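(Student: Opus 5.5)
We work with the dual form $\|\widehat g\|_{L^2(P_6,d\sigma)}\lesssim\|g\|_{L^{8/5}(F^6)}$ and use the $TT^*$ identity
\[
\|\widehat g\|_{L^2(d\sigma)}^2=\langle g*(d\sigma)^\vee,g\rangle .
\]
Write $m=(\underline m,m_6)\in F^5\times F$. Completing the square and evaluating Gauss sums gives an explicit kernel. For $m_6\neq0$ we have $(d\sigma)^\vee(m)=\varepsilon_q\,\eta(m_6)\,q^{-5/2}e\bigl(-\underline m\cdot\underline m/(4m_6)\bigr)$, where $\eta$ is the quadratic character and $|\varepsilon_q|=1$. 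For $m_6=0$ we have $(d\sigma)^\vee(m)=\delta_{\underline m=0}$. Accordingly we split $(d\sigma)^\vee=\delta_0+K_0+K$, where $K_0$ is supported on $\{m_6=0,\ m\neq 0\}$ and vanishes there, so $K_0=0$. Thus
\[
\|\widehat g\|^2\le q^{-6}\|g\|_2^2\cdot(\text{normalisation})+|\langle g*K,g\rangle| .
\]
The diagonal term is trivially fine, since $\|g\|_{L^2}\le\|g\|_{L^{8/5}}$ in the normalised-counting setting. The whole problem is the oscillatory term $\langle g*K,g\rangle$.

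\textbf{Step 1: slice in $x_6$.} Write $g(\underline x,t)=g_t(\underline x)$. Then
\[
\langle g*K,g\rangle=\sum_{s\neq t}\eta(s-t)\,q^{-5/2}\langle g_s*_{5}Q_{s-t},\,g_t\rangle,
\]
where $Q_u(\underline m)=e(-\underline m\cdot\underline m/4u)$. Convolution with $Q_u$ is, up to modulation, a rescaled Fourier transform on $F^5$. Hence $\langle g_s*Q_u,g_t\rangle$ equals (up to unimodular factors) $q^{5/2}\langle \widehat{\phi_s}(\cdot/u),\overline{\widehat{\phi_t}}\rangle$-type pairings, with $\phi_t=g_t\,e(\underline x\cdot\underline x/4u)$. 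Two bounds are then available for each pair $(s,t)$: an $L^1\times L^1$ bound $q^{-5/2}\|g_s\|_1\|g_t\|_1$, and an $L^2\times L^2$ bound $\|g_s\|_2\|g_t\|_2$ (via Plancherel). This reduces the problem to a one-dimensional fractional-integration-type bilinear form in $(s,t)$ with the character weight $\eta(s-t)$. In the model case of a single level set this interpolates exactly to the exponent $8/5$; this is the non-endpoint argument of \cite{IKL}.

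\textbf{Step 2: restricted-type estimate with gain.} For $g=1_G$ I would prove the restricted bound $|\langle 1_G*K,1_G\rangle|\lesssim |G|^{5/4}\cdot(\text{normalisation})$, and more precisely a gain factor $\min\bigl((|G|/q^{a})^{\kappa},(q^{a}/|G|)^{\kappa}\bigr)$ away from the critical size $|G|\sim q^{a}$. Here $a$ is the size dictated by the extremal examples, namely the $2$-dimensional affine subspaces contained in $P_6$ together with their dual objects. The gain comes from comparing the $L^1$ and $L^2$ bounds in Step 1: they coincide only at the critical scale.

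\textbf{Step 3: from restricted to strong type at the endpoint.} For general $g$, decompose $g=\sum_k g_k$ into dyadic level sets $g_k\approx 2^k1_{G_k}$ and expand
\[
\langle g*K,g\rangle=\sum_{j,k}\langle g_j*K,g_k\rangle .
\]
The off-diagonal terms need a bilinear version of Step 2 that decays in $|j-k|$ or in the size mismatch $|G_j|/|G_k|$. For these I would use the two bounds of Step 1 applied asymmetrically ($L^1$ on one side, $L^2$ on the other), together with Cauchy--Schwarz across slices.

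The main obstacle is this step. At the exact endpoint the diagonal contribution $\sum_k\langle g_k*K,g_k\rangle\lesssim\sum_k\|g_k\|_{8/5}^2$ is fine, since $2/(8/5)>1$ gives $\sum_k\|g_k\|_{8/5}^2\le\|g\|_{8/5}^2$. The difficulty is that the cross terms must be summed without logarithmic loss. I would first try a Bourgain-type summation: prove bilinear bounds of the form
\[
|\langle g_j*K,g_k\rangle|\lesssim 2^{-\kappa|j-k|}\|g_j\|_{8/5}\|g_k\|_{8/5}
\]
using the character weight $\eta(s-t)$ to exploit cancellation when the supports have very different heights. If that fails, I would exploit the Hilbert-space target: $\widehat g$ lives in $L^2$, so the estimate $L^{8/5,1}\to L^2$ can be upgraded to $L^{8/5,2}\to L^2$ by almost orthogonality of the pieces $\widehat{g_k}$. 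Since $L^{8/5}\subset L^{8/5,2}$, this would give the theorem.
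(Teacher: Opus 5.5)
\textbf{There is a genuine gap, and it sits in Steps 1--2, before the endpoint issue even arises.} The only information you take from $K$ is the pair of per-slice bounds $q^{-5/2}\|g_s\|_1\|g_t\|_1$ and $\|g_s\|_2\|g_t\|_2$. You then sum these over $s\ne t$ in absolute value.

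Over a finite field there is no decay in $s-t$, so this is no stronger than Stein--Tomas. Interpolating the two bounds and summing over at most $q$ slices gives $q^{9/2-7/p}\|g\|_p^2$, which is uniform in $q$ only for $p\le 14/9<8/5$. Concretely, let $G$ have $q$ slices of size $q^{5/2}$, so $|G|=q^{7/2}$. Your bound is $q^2\min\{q^{-5/2}q^{5},q^{5/2}\}=q^{9/2}$, whereas restricted type at $8/5$ requires $|G|^{5/4}=q^{35/8}$.

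So the $L^1$ and $L^2$ bounds do not ``interpolate exactly to $8/5$''. They cross at $|G|\sim q^{7/2}$, where both fail, not at the critical scale $q^4$ coming from the $2$-planes. This is also not the argument of \cite{IKL}. The weight $\eta(s-t)$ cannot rescue it: once each pairing $\langle g_s*Q_{s-t},g_t\rangle$ is bounded in absolute value the sign is gone. Moreover, the pairing depends on $s-t$ through $Q_{s-t}$, so it is not of product form $a_sb_t$, which a Paley-graph type bound would need.

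\textbf{The missing ingredient is arithmetic:} the additive energy bound $\Lambda(E)\lesssim q^{-1}|E|^3+q^2|E|^2$ for $E\subset P_6$. Fed through the slice lemma (Lemma \ref{lem:slice}) and Plancherel, it gives $\|h*K\|_4\lesssim q^{-1}|B|^{3/4}+|B|^{1/2}$ for $|h|\le1_B$. H\"older with exponents $\frac43,4$ then gives
\[
|\Qcal(f,h)|\lesssim|A\cap B|+q^{-1}|A|^{3/4}|B|^{3/4}+\min\{|A|^{3/4}|B|^{1/2},|A|^{1/2}|B|^{3/4}\},
\]
to be used together with the Parseval bound $q|A|^{1/2}|B|^{1/2}$. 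At $|A|=|B|=q^{7/2}$ this gives $q^{17/4}<q^{35/8}$, beating your example.

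\textbf{Step 3 also works differently from what you anticipate.} Write $a_i=2^{-8i/5}|G_i|$, so $\sum a_i\lesssim1$. The asymmetric term does give decay $2^{-|i-j|/5}a_i^{1/2}a_j^{1/2}$. The symmetric term $q^{-1}2^{(i+j)/5}a_i^{1/2}a_j^{1/2}$, however, has no decay in $|i-j|$. It is summable only after taking the minimum with the Parseval term $q2^{-(i+j)/5}a_i^{1/2}a_j^{1/2}$, because $\sum_{n}\min\{q^{-1}2^{n/5},q2^{-n/5}\}\lesssim1$ over $n=i+j$.

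So the available estimates do not give a uniform bound $2^{-\kappa|j-k|}\|g_j\|_{8/5}\|g_k\|_{8/5}$. The gain comes from localization of $i+j$ near $5\log_2 q$, not from $|i-j|$. Likewise, the ``almost orthogonality'' of the $\widehat{g_k}$ in your fallback is exactly this cross-term estimate; it does not follow abstractly from the Hilbert-space target.

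A minor point: the $\delta_0$ term is $\|g\|_2^2$ in counting measure. The inequality $\|g\|_2\le\|g\|_{8/5}$ holds for counting measure, not for normalized counting measure as you wrote.
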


In the final section we record a transference observation. The conjectured endpoint $L^2\to L^3$ restriction estimates for the paraboloid and the sphere in $F^3$ would imply that the integer paraboloid and the lattice spheres in $\Z^3$ are $\Lambda(3)$ sets, with a constant independent of the size of the support. This implication, with no power loss in the size of the support on the right-hand side, is specific to ambient dimension three and is perhaps some indication of the difficulty of the full finite field restriction problem. No such fully explicit $\Lambda(3)$ set that is not also a $\Lambda(4)$ set is presently known.

\section{Background and notation}\label{sec:background}

We use counting measure on $F^d$ and normalized counting measure on the paraboloid, so for a set $S$ we write $1_S$ for its indicator and $|S|$ for its cardinality. For $g:F^d\to\C$ we set $\widehat g(\xi)=\sum_{x\in F^d}g(x)e(-x\cdot\xi)$. We write $X\lesssim Y$ to mean $X\le CY$ for a constant $C$ independent of $q$, and $X\lesssim_\eps Y$ when the constant is allowed to depend on a parameter $\eps$; the relation $X\sim Y$ means $X\lesssim Y$ and $Y\lesssim X$. For a function $g$ the notation $g(x)\sim\lambda$ means $\lambda/2\le|g(x)|\le2\lambda$. We collect the various estimates we will need from the papers \cite{IKL,LewBilinear,LewK,LewRect,MT}; we state them without proof and refer to those papers for the details.

An estimate of the form $\|(f d\sigma)^\vee\|_{L^r(F^d)}\lesssim\|f\|_{L^2(P_d,d\sigma)}$ is dual to
\begin{equation}\label{eq:dual}
\|\widehat g\|_{L^2(P_d,d\sigma)}\lesssim\|g\|_{L^{r'}(F^d)},\qquad r'=\tfrac{r}{r-1},
\end{equation}
and one has the identity $\|\widehat g\|_{L^2(P_d,d\sigma)}^2=\langle g,g*(d\sigma)^\vee\rangle_{F^d}$, the inner product being taken with respect to counting measure. Here $(d\sigma)^\vee=\delta_0+K$, where for $x=(\underline x,x_d)\in F^{d-1}\times F$ with $x\ne0$ the kernel is the Gauss sum
\[
K(x)=q^{-(d-1)}\sum_{\underline\xi\in F^{d-1}}e\big(\underline x\cdot\underline\xi+x_d\,\underline\xi\cdot\underline\xi\big).
\]
Writing $G(a)=\sum_{t\in F}e(at^2)$ for the one-dimensional Gauss sum, with $|G(a)|=q^{1/2}$ for $a\ne0$, completing the square evaluates this as
\[
K(x)=q^{-(d-1)}G(x_d)^{d-1}\,e\Big(-\frac{\underline x\cdot\underline x}{4x_d}\Big)\quad(x_d\ne0),\qquad K(x)=0\quad(x_d=0,\ x\ne0)
\]
(see (18) of \cite{MT}). In particular $|K(x)|\le q^{-(d-1)/2}$ for $x\ne0$, which equals $q^{-1}$ in the three-dimensional case $d=3$. The decomposition yields
\begin{equation}\label{eq:steintomas}
\|\widehat g\|_{L^2(P_d,d\sigma)}\le\|g\|_{L^2(F^d)}+q^{-(d-1)/4}\|g\|_{L^1(F^d)},
\end{equation}
which in dimension $d=3$ gives the bound with coefficient $q^{-1/2}$. Interpolating with the Parseval bound $\|\widehat g\|_{L^2(P_d,d\sigma)}\le q^{1/2}\|g\|_{L^2(F^d)}$ recovers the Stein--Tomas estimate.

The kernel $K$ also relates the fourth moment of a convolution to the restriction of that convolution to horizontal slices. For $h:F^d\to\C$ and $z\in F$ write $h_z(\eta)=h(\eta,z)$ for the slice of $h$ at height $z$, a function on $F^{d-1}$, and let $\widetilde h_z$ be its lift to the paraboloid, the function on $P_d$ defined by $\widetilde h_z(\eta,\eta\cdot\eta)=h_z(\eta)$.

\begin{lemma}[{\cite[Lemma 2.1]{IKL}}]\label{lem:slice}
For every $h:F^d\to\C$,
\begin{equation}\label{eq:slice}
\|h*K\|_{L^4(F^d)}\lesssim q^{(d-1)/2}\sum_{z\in F}\big\|(\widetilde h_zd\sigma)^\vee\big\|_{L^4(F^d)}.
\end{equation}
\end{lemma}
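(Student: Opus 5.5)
Decompose $h$ into its horizontal slices, $h=\sum_{z\in F}h^{(z)}$ with $h^{(z)}(\eta,w)=h_z(\eta)1_{w=z}$. Minkowski's inequality in $L^4$ then gives $\|h*K\|_{L^4}\le\sum_z\|h^{(z)}*K\|_{L^4}$. Both sides of \eqref{eq:slice} are translation-invariant in the last coordinate: a vertical translation of $h^{(z)}$ commutes with convolution by $K$, and it only modulates $(\widetilde h_zd\sigma)^\vee$. So it suffices to treat a single slice at height $z=0$, that is, a function $g(\eta,w)=h_0(\eta)1_{w=0}$. For such $g$ we aim to prove
\[
\|g*K\|_{L^4(F^d)}\lesssim q^{(d-1)/2}\,\|(\widetilde{h_0}d\sigma)^\vee\|_{L^4(F^d)}.
\]

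To compute $g*K$ we use the explicit Gauss sum formula. For $x=(\underline x,x_d)$ with $x_d\neq0$,
\[
g*K(x)=q^{-(d-1)}G(x_d)^{d-1}\sum_{\eta}h_0(\eta)\,e\Big(-\frac{(\underline x-\eta)\cdot(\underline x-\eta)}{4x_d}\Big),
\]
while at $x_d=0$ the kernel is $\delta_0$ (from $(d\sigma)^\vee$) or vanishes, so we may drop that hyperplane or bound it trivially; when $K$ alone is used, $x_d=0$ contributes nothing off the origin. Now expand the square in the exponent. The term $e(-\underline x\cdot\underline x/4x_d)$ is unimodular and can be discarded. What remains is
\[
\Big|\sum_\eta h_0(\eta)\,e\Big(\frac{\underline x\cdot\eta}{2x_d}-\frac{\eta\cdot\eta}{4x_d}\Big)\Big|.
\]
Substitute $s=-1/(4x_d)$ and $\underline y=\underline x/(2x_d)$, which equals $-2s\underline x$. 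This is exactly $\big|\sum_\eta h_0(\eta)e(\underline y\cdot\eta+s\,\eta\cdot\eta)\big|=q^{d-1}|(\widetilde{h_0}d\sigma)^\vee(\underline y,s)|$.

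Since $|G(x_d)|^{d-1}=q^{(d-1)/2}$, we get $|g*K(\underline x,x_d)|=q^{(d-1)/2}|(\widetilde{h_0}d\sigma)^\vee(\underline y,s)|$. For each fixed $x_d\ne0$ the map $\underline x\mapsto\underline y$ is a bijection of $F^{d-1}$, and $x_d\mapsto s$ is a bijection of $F^*$. Summing the fourth powers therefore gives
\[
\|g*K\|_4^4\le q^{2(d-1)}\|(\widetilde{h_0}d\sigma)^\vee\|_4^4,
\]
which is the slice bound with constant $1$. Summing over $z$ yields \eqref{eq:slice}. For a general slice at height $z$ the same computation works with $x_d$ replaced by $x_d-z$.

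The only delicate point is the bookkeeping of the change of variables and the handling of $x_d=z$. This is where one must remember that $K$ vanishes on $x_d=0$ away from the origin, and that the origin, if included, contributes the trivial term $|h_z(\underline x)|$. That term would be controlled by Plancherel, $\|h_z\|_4\le\|h_z\|_2\lesssim q^{(d-1)/2}\|(\widetilde{h_z}d\sigma)^\vee\|_{L^4}$ after using Hölder on $F^d$ appropriately. I expect no genuine obstacle beyond checking these normalizations.
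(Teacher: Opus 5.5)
Your main argument is correct, and it is in fact a complete proof with constant $1$. It runs: Minkowski over the horizontal slices; the Gauss-sum formula for $K$ with $x_d$ replaced by $t=x_d-z\neq 0$; completing the square and discarding the unimodular factor $e(-\underline x\cdot\underline x/4t)$; then the change of variables $(\underline x,t)\mapsto(\underline x/(2t),-1/(4t))$, a bijection of $F^{d-1}\times F^\times$ onto itself. These steps give $\|h^{(z)}*K\|_4\le q^{(d-1)/2}\|(\widetilde h_z d\sigma)^\vee\|_4$. The paper states this lemma without proof, quoting it from \cite{IKL}, and this direct computation is the natural proof of it.

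The one thing to fix is the fallback you offer for the origin, which is false. The bound $\|h_z\|_4\le\|h_z\|_2\lesssim q^{(d-1)/2}\|(\widetilde h_z d\sigma)^\vee\|_{L^4(F^d)}$ fails for $d\ge3$. Take $h_z=\delta_{\eta_0}$: the left side is $1$, while $|(\widetilde h_z d\sigma)^\vee|\equiv q^{-(d-1)}$, so the right side is $q^{(d-1)/2}\cdot q^{-(d-1)}\cdot q^{d/4}=q^{(2-d)/4}$. The lemma itself would be false if the kernel carried the point mass: with $h=\delta_0$ and kernel $(d\sigma)^\vee$ in place of $K$, the left side is at least $1$. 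What you need, and have, is $K(0)=0$. Since $(d\sigma)^\vee(0)=q^{-(d-1)}|P_d|=1=\delta_0(0)$, the decomposition $(d\sigma)^\vee=\delta_0+K$ forces $K$ to vanish on the whole hyperplane $x_d=0$. Hence $h^{(z)}*K\equiv0$ on $x_d=z$, and there is nothing extra to control. Delete the ``bound it trivially'' alternative and state this instead.
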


This reduces the fourth moment of $h*K$ to the extension operators of the planar slices $\widetilde h_z$, each of which is estimated by the additive energy of its support through Plancherel.

We use two standard reductions, both recalled in \cite{LewBilinear,LewK}. The first is dyadic pigeonholing, which reduces matters, at the cost of a factor $q^\eps$, to functions $g$ with $g\sim1$ on their support $G$. The second is the following epsilon-removal lemma.

\begin{lemma}[{\cite[Lemma 16]{LewK}}]\label{lem:eps-removal}
Let $s_0>1$, and suppose that for every $\eps>0$,
\begin{equation}\label{eq:restricted}
\|\widehat g\|_{L^2(P,d\sigma)}\lesssim_\eps q^{\eps}|G|^{1/s_0}
\end{equation}
holds for every $g$ with $g\sim1$ on its support $G$. Then
\begin{equation}\label{eq:strong-type}
\|\widehat g\|_{L^2(P,d\sigma)}\lesssim_s\|g\|_{L^s(F^d)}\qquad\text{for every }s<s_0.
\end{equation}
\end{lemma}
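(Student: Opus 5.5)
The plan follows the standard epsilon-removal scheme for finite field restriction, as in \cite{LewK}, and reduces the strong-type bound \eqref{eq:strong-type} to the restricted estimate \eqref{eq:restricted}. Fix $s<s_0$ and $g$. By homogeneity we normalize $\|g\|_{L^s(F^d)}=1$. We then split $g$ dyadically by magnitude, $g=\sum_{k}g_k$, where $g_k=g1_{\{|g|\sim 2^{-k}\}}$ with support $G_k$. Since $\|g\|_{L^s}=1$, only $k\ge -1$ occur, and $|G_k|\lesssim 2^{ks}$. Terms with $2^{-k}$ below a large negative power of $q$ (say $k\ge C\log q$) are handled crudely. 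Using $|G_k|\le q^d$ and the trivial bound $\|\widehat{g_k}\|_{L^2(d\sigma)}\le\|g_k\|_{L^1}$, their total contribution is $O(1)$. This leaves $O(\log q)$ levels.

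\textbf{The main point} is to avoid the $q^\eps$ loss and the $\log q$ count of levels, and this is the step I expect to be the main obstacle. Applying \eqref{eq:restricted} to $2^kg_k$ gives
\[
\|\widehat{g_k}\|_{L^2(d\sigma)}\lesssim_\eps q^\eps 2^{-k}|G_k|^{1/s_0}\lesssim q^\eps 2^{-k(1-s/s_0)}.
\]
The factor $2^{-k(1-s/s_0)}$ decays geometrically because $s<s_0$, which handles all levels with $2^k\ge q^{\delta}$ once $\eps$ is chosen small relative to $\delta(1-s/s_0)$. For the remaining levels, where $2^k\le q^\delta$, the $q^\eps$ loss must be beaten by a different estimate. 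Here I would use the Stein--Tomas type bound \eqref{eq:steintomas} together with Parseval. If $|G_k|$ is small compared with $q^{(d-1)/2}$, the $L^2$ term $\|g_k\|_{L^2}\lesssim 2^{-k}|G_k|^{1/2}\le 2^{-k}|G_k|^{1/s}$ dominates. In that case the bound is $O(1)$ with no loss, provided $s\le 2$, which holds since the estimates of interest have $s=r'<2$. If instead $|G_k|\gtrsim q^{c}$ for some fixed $c>0$, then $q^\eps\le |G_k|^{\eps/c}$. The restricted bound then becomes $2^{-k}|G_k|^{1/s_0+\eps/c}$, which is at most $2^{-k}|G_k|^{1/s}$ once $\eps/c<1/s-1/s_0$.

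Summing over $k$ then splits by level. The large-$|G_k|$ levels carry geometric decay in $k$ only if $|G_k|^{1/s}2^{-k}$ is strictly below $1$ by a power. To secure this, I would use the strict margin: write $|G_k|^{1/s'}$ with $s'$ strictly between $s$ and $s_0$. This gives $\|\widehat{g_k}\|\lesssim 2^{-k(1-s/s')}$, which is summable in $k$. The small-$|G_k|$ levels are summed via the $L^2$ term. The cleanest way to organize them is to group the $g_k$ with $|G_k|\le q^{c}$ into a single function $h$ and apply the identity $\|\widehat h\|^2=\langle h,h*(d\sigma)^\vee\rangle$ together with \eqref{eq:steintomas} to $h$ directly. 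This gives $\|h\|_{L^2}+q^{-(d-1)/4}\|h\|_{L^1}$. Each piece is controlled by Hölder with $\|h\|_{L^s}\le1$ and $|{\rm supp}\,h|\lesssim q^{c}\log q$, where $c$ is chosen so that the $L^1$ term is harmless. The remaining delicate part is checking that a single choice of $c,\eps,s'$ makes both regimes close. I expect that to work since all constraints are strict inequalities depending only on $s,s_0,d$.
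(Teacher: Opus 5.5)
Your argument is correct, and it is the standard $\eps$-removal argument behind this lemma; the paper gives no proof of its own and only cites \cite{LewK}. The three steps are these: \eqref{eq:steintomas} controls supports of size at most about $q^{(d-1)/2}$ with no loss; for larger supports, $q^{\eps}\le|G|^{\eps'}$ turns the hypothesis into a loss-free restricted bound at any $s'\in(s,s_0)$; and the dyadic levels then sum via the factor $2^{-k(1-s/s')}$. Comparing the small levels with $|G_k|^{1/s'}$ rather than $|G_k|^{1/s}$ already makes them summable one by one, so your grouping into $h$ and the truncations are unnecessary.

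The one point to repair is your justification of $s\le2$ by appeal to the ``estimates of interest'', since the lemma as stated allows any $s_0>1$. Instead, note that the hypothesis itself forces $s_0\le2$. Indeed, $g=1_G$ with $|G|=\lfloor q^{(d-1)/2}/2\rfloor$ satisfies $\|\widehat g\|_{L^2(P,d\sigma)}^2\ge|G|-q^{-(d-1)/2}|G|^2\gtrsim|G|$. This is incompatible with the bound $q^{\eps}|G|^{1/s_0}$ for small $\eps$ and large $q$ when $s_0>2$.
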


By \eqref{eq:dual}, the bound \eqref{eq:strong-type} is the extension estimate $R^*(2\to r)\lesssim_r1$ for every $r>s_0'$, where $s_0'=\frac{s_0}{s_0-1}$. It therefore suffices to prove the restricted estimate \eqref{eq:restricted} for $g\sim1_G$.

For the combinatorial input we record three quantities. For a finite subset $E$ of an abelian group, its additive energy is
\begin{equation}\label{eq:energy-def}
\Lambda(E)=|\{(a,b,c,d)\in E^4:a+b=c+d\}|.
\end{equation}
For $A\subset F^2$ we call a triple $(x_0,x_1,x_2)$ a corner if $(x_1-x_0)\cdot(x_2-x_1)=0$, and a quadruple $(x_0,x_1,x_2,x_3)$ a rectangle if each cyclically consecutive triple is a corner; $\Rcal(A)$ denotes the number of rectangles in $A$. For $A,B\subset F^2$ we call $(x_1,x_2,y_1,y_2)$, with $x_1,x_2\in A$ and $y_1,y_2\in B$, a trapezoid if $x_1\ne x_2$, $y_1\ne y_2$, and $x_1-x_2=\lambda(y_1-y_2)$ for some $\lambda\in F^\times$, that is, if the two differences are nonzero and parallel; $\Tcal(A,B)$ denotes the number of trapezoids. Configurations with $x_1=x_2$ or $y_1=y_2$ are not counted. The following follows from Vinh's incidence estimate \cite{Vinh} and can be found, for example, as (3.6) and (3.7) in \cite{LewBilinear}:

\begin{lemma}\label{lem:rect}
For every $A\subset F^2$,
\begin{equation}\label{eq:rect-CS}
\Rcal(A)\lesssim|A|^{5/2},
\end{equation}
and
\begin{equation}\label{eq:rect-vinh}
\Rcal(A)\lesssim q^{-1}|A|^{3}+q^{1/2}|A|^{2},
\end{equation}
the second bound being the smaller for $q\le|A|\le q^2$.
\end{lemma}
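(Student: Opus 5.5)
The plan is to parametrize rectangles by their first three vertices and reduce both bounds to counting corners. A rectangle $(x_0,x_1,x_2,x_3)$ in $A$ satisfies $(x_1-x_0)\cdot(x_2-x_1)=0$, $(x_2-x_1)\cdot(x_3-x_2)=0$, $(x_3-x_2)\cdot(x_0-x_3)=0$ and $(x_0-x_3)\cdot(x_1-x_0)=0$.

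For \eqref{eq:rect-CS} I would group rectangles by the "diagonal" pair $(x_0,x_2)$. Given $(x_0,x_2)$, the admissible $x_1$ are the points $x_1\in A$ with $(x_1-x_0)\cdot(x_2-x_1)=0$. Call this number $N(x_0,x_2)$. Likewise the admissible $x_3$ satisfy $(x_3-x_2)\cdot(x_0-x_3)=0$, which is the same condition up to symmetry. Hence
\[
\Rcal(A)\le\sum_{x_0,x_2}N(x_0,x_2)^2 .
\]
This is an overcount, since we drop the other two corner conditions, but an upper bound suffices. The condition $(x_1-x_0)\cdot(x_2-x_1)=0$ says that $x_1$ lies on the "Thales circle" with diameter $x_0x_2$, namely the conic $\{x:(x-m)\cdot(x-m)=r\}$ with $m=(x_0+x_2)/2$ and $r=\tfrac14(x_2-x_0)\cdot(x_2-x_0)$.

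Rather than bounding $\sum N^2$ by Cauchy--Schwarz against $\sum N$, which only gives $|A|^3$, I would instead fix the pair $(x_0,x_1)$. The corner condition at $x_1$ then forces $x_2\in x_1+\ell^\perp$ with $\ell=x_1-x_0$, a line; when $-1$ is not a square, $\ell\cdot\ell\neq0$ for $\ell\neq 0$. The corner condition at $x_0$ forces $x_3\in x_0+\ell^\perp$. Given $x_2$ on the first line, the remaining conditions force $x_3-x_2$ to be parallel to $\ell$, which determines $x_3$ uniquely as the foot on the parallel line. So
\[
\Rcal(A)\le\sum_{x_0\neq x_1}|A\cap L_{x_0,x_1}|,
\]
up to degenerate terms with $x_0=x_1$, which contribute at most $|A|^2\cdot\max_L|A\cap L|$. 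Here $L_{x_0,x_1}$ is the line through $x_1$ perpendicular to $x_1-x_0$.

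To get \eqref{eq:rect-CS}, I would apply Cauchy--Schwarz to this sum and count pairs of points of $A$ on the lines $L_{x_0,x_1}$. Two points $x_1,x_2$ determine the line through them, and the number of $x_0$ with $x_0\in x_1+\text{(normal direction)}$ is at most $|A\cap\text{line}|$. This gives $\Rcal^2\lesssim|A|^2\cdot|A|^3$, that is $\Rcal\lesssim|A|^{5/2}$.

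For \eqref{eq:rect-vinh} I would treat the configuration as a point--line incidence problem. The points are $A$ and the lines are the multiset $\{L_{x_0,x_1}\}$ of size at most $|A|^2$, so $\Rcal(A)$ is essentially the incidence count $I(A,\mathcal L)$. Vinh's estimate \cite{Vinh} gives
\[
I\le\frac{|A||\mathcal L|}{q}+q^{1/2}\sqrt{|A||\mathcal L|}.
\]
The main obstacle is that $\mathcal L$ is a multiset, since Vinh's bound is stated for distinct lines. I would handle this with the weighted form of Vinh's spectral bound:
\[
I\le\frac{|A|\,\|w\|_1}{q}+q^{1/2}|A|^{1/2}\|w\|_2 .
\]
Here $\|w\|_1\le|A|^2$. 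For $\|w\|_2^2$, the multiplicity of a given line $L$ is the number of pairs $(x_0,x_1)$ with $x_1\in L$ and $x_0$ on the normal to $L$ at $x_1$. So $\|w\|_2^2$ counts quadruples, which I would bound by $|A|^3$ using the fact that for distinct $x_1\neq x_1'$ on $L$ the two normals are disjoint. Substituting gives $q^{-1}|A|^3+q^{1/2}|A|^{1/2}|A|^{3/2}$, which is \eqref{eq:rect-vinh}. Comparing the two bounds in the range $q\le|A|\le q^2$ is routine.
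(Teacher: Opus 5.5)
Your route is the one the paper points to. It gives no proof of its own, citing the Cauchy--Schwarz argument implicit in Mockenhaupt--Tao and Vinh's incidence bound. The reduction $\Rcal(A)\le\sum_{x_0\ne x_1}|A\cap L_{x_0,x_1}|+(\text{degenerate})$ is sound, and so is your weighted Vinh step: $\|w\|_1\le|A|^2$, and $w(L)\le|A|$ by disjointness of the normals, so $\|w\|_2^2\le|A|\,\|w\|_1$.

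\textbf{The degenerate bound fails as written.} The quantity $|A|^2\max_L|A\cap L|$ can be $|A|^2\min(|A|,q)$. For example, it is $q^3$ when $A$ is a full line, which exceeds both $|A|^{5/2}$ and $q^{-1}|A|^3+q^{1/2}|A|^2\sim q^{5/2}$. You need anisotropy here too. If $x_0=x_1$, put $u=x_3-x_2$ and $v=x_2-x_0$. The second corner condition gives $u\cdot v=0$, and the third gives $-u\cdot(u+v)=0$. Hence $u\cdot u=0$, so $u=0$. Thus a rectangle with $x_0=x_1$ must have $x_2=x_3$, and there are only $|A|^2$ of these. The same computation handles the case $x_2=x_1$ in your uniqueness claim for $x_3$. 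There the second condition is vacuous, so ``$x_3-x_2\parallel\ell$'' is unjustified, but one still gets $x_3=x_0$. Without $-1$ being a nonsquare the lemma genuinely fails: on an isotropic line every quadruple is a rectangle.

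\textbf{The key count in the Cauchy--Schwarz step is not justified.} After Cauchy--Schwarz you need
\[
Q=\sum_{x_0\ne x_1}|A\cap L_{x_0,x_1}|^2\lesssim|A|^3,
\]
that is, a bound on quadruples $(x_0,x_1,y,y')$ with $y,y'\in A\cap L_{x_0,x_1}$. Your description fixes $x_1$ and a second point of the line, then counts $x_0$ on the normal. That gives a sum of products $|A\cap N|\,|A\cap L|$ over pairs, with no visible $|A|^3$ bound. The right count fixes $(x_0,y,y')$ instead. If $y\ne y'$, then $L_{x_0,x_1}$ must be the line $yy'$, and $x_1$ is forced to be the foot of the perpendicular from $x_0$ to that line; it is unique since the line is anisotropic. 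So these quadruples number at most $|A|^3$, and the diagonal $y=y'$ contributes at most $|A|^3$ as well. This gives $\Rcal(A)\lesssim|A|\cdot Q^{1/2}\lesssim|A|^{5/2}$.

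With these two repairs your argument proves both \eqref{eq:rect-CS} and \eqref{eq:rect-vinh}.
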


The bound \eqref{eq:rect-CS} is an elementary Cauchy-Schwarz argument implicit in \cite{MT}, and \eqref{eq:rect-vinh} is an incidence estimate of Vinh \cite{Vinh} (see (15) of \cite{RS}).

\begin{lemma}[{\cite[Theorem 4]{LewRect}}]\label{lem:rect-prime}
If $F=\F_p$ and $|A|\le p^{26/21}$, then for every $\eps>0$
\begin{equation}\label{eq:rect-prime}
\Rcal(A)\lesssim_\eps|A|^{99/41+\eps}.
\end{equation}
\end{lemma}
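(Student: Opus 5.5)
This is \cite[Theorem 4]{LewRect}, and the paper uses it only as an input. If I had to reconstruct it, I would turn the rectangle count into point–line incidences in $\F_p^2$. I would then use a prime-field incidence theorem in place of Vinh's bound, which is what separates \eqref{eq:rect-prime} from \eqref{eq:rect-vinh}. The specific constants $99/41$ and $26/21$ would come out of the optimisation at the end; I have not checked that this plan produces exactly them.

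\emph{Geometric reformulation.} A rectangle $(x_0,x_1,x_2,x_3)$ in $A$ is determined by its first three vertices: $x_3=x_0+x_2-x_1$, and it must lie in $A$. Fix the vertex $x_1$ and a non-isotropic direction $v$ (since $-1$ is not a square, every nonzero direction is non-isotropic). Then $x_0$ lies on the line $\ell$ through $x_1$ with direction $v$, and $x_2$ lies on the perpendicular line $\ell^\perp$ through $x_1$. So
\[
\Rcal(A)\approx\sum_{x_1}\sum_{v}\#\{(x_0,x_2)\in(A\cap\ell)\times(A\cap\ell^\perp):x_0+x_2-x_1\in A\}.
\]
Equivalently, a rectangle consists of two pairs $\{x_0,x_2\}$ and $\{x_1,x_3\}$ with a common midpoint and diagonals of equal $\cdot$-length. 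Either description converts $\Rcal(A)$ into a weighted count over rich lines in the set $L$ of lines spanned by $A$.

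\emph{Incidence input and dyadic decomposition.} I would split $L$ dyadically by richness, $L_k=\{\ell:|\ell\cap A|\sim 2^k\}$.
\begin{itemize}
\item For poor lines, Cauchy--Schwarz in the style of \eqref{eq:rect-CS} should be enough.
\item For rich lines, I would bound $|L_k|$ with the Stevens--de Zeeuw point–line incidence theorem over $\F_p$. It gives $I(P,L)\lesssim |P|^{11/15}|L|^{11/15}$ whenever $|P|^{13}|L|^{-2}\le p^{15}$, and hence a Szemerédi--Trotter-type count $|L_k|\lesssim |A|^{11/4}2^{-15k/4}$ in the admissible range.
\item The hypothesis $|A|\le p^{26/21}$ should be exactly what keeps every dyadic piece used inside that range.
\end{itemize}
For the rich-line contribution I would count rectangles two sides at a time. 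A rectangle is a pair of parallel sides $(x_0,x_1)$ and $(x_3,x_2)$, which are translates of each other, joined by perpendicular sides. So I would bound the number of pairs of parallel, equal-difference segments lying on rich lines with perpendicular offset, again via incidences between points of $A$ and translated rich lines.

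\emph{Optimisation and main obstacle.} Summing the dyadic pieces should give a bound of the form $\min\big(|A|^{5/2},\ |A|^{\alpha}2^{\beta k}+\dots\big)$. Optimising the threshold $k$ should produce an exponent strictly between $5/2$ and the $17/7$ coming from Rudnev's point–plane theorem, with $\eps$-losses from the dyadic pigeonholing; the claim is that this exponent is $99/41$.

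The main obstacle is the middle step. I need an incidence reformulation in which the rich-line contribution is controlled by the $11/15$ exponent and not by the trivial bound, while every configuration stays within the Stevens--de Zeeuw range. My first attempt would be to apply the incidence bound to $A$ against the family of lines $\{x+\ell^\perp\}$ generated from rich lines $\ell$, and then interpolate with \eqref{eq:rect-CS}.
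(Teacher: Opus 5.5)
Your plan has a genuine gap: the argument you outline produces $|A|^{17/7}$, and you have the order of the exponents backwards. Your skeleton (assign each rectangle to a richest edge-line, split lines dyadically by richness, bound $|\mathcal L_k|$ by Stevens--de Zeeuw) is the standard count the paper recalls in the proof of Lemma \ref{lem:hybrid-rectangle}. Write $n=|A|$.
\begin{itemize}
\item Rectangles whose richest edge-line has at most $K$ points number $\lesssim n^2K$, by \eqref{eq:rect-low-rich}. This is a trivial count, not the Cauchy--Schwarz bound \eqref{eq:rect-CS}.
\item A class of $k$-rich lines contributes $k^3|\mathcal L_k|$ or $nk|\mathcal L_k|$, as in \eqref{eq:rect-rich-classes}.
\end{itemize}
Inserting $|\mathcal L_k|\lesssim n^{11/4}k^{-15/4}+nk^{-1}$ (ignoring the $p$-dependent term in \eqref{eq:rich-SdZ}) gives
\[
\Rcal(A)\lesssim n^2K+n^{11/4}K^{-3/4}+n^{19/8},
\]
and the optimal threshold $K=n^{3/7}$ yields exactly $n^{17/7}$. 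But $99/41\approx 2.4146$ is \emph{smaller} than $17/7\approx 2.4286$. So the target does not lie ``strictly between $5/2$ and $17/7$''; it is strictly stronger than anything your optimisation can output. No choice of threshold helps. Interpolating with the uniform bound $n^{5/2}$ from \eqref{eq:rect-CS} only gives exponents between $17/7$ and $5/2$. The classes $k\ge n^{1/2}$ contribute only $n^{19/8}<n^{99/41}$. So what is needed is a new estimate that beats both $n^2K$ and $k^3|\mathcal L_k|$ at the balancing scale $k\approx n^{3/7}$, and your proposal contains none.

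The step you flag as the main obstacle does not supply this as described. For a class $\mathcal L_k$, counting rectangles through the lines $x+\ell^\perp$ means bounding
\[
\sum_{\ell\in\mathcal L_k}\ \sum_{x\in\ell\cap A}\big|(x+\ell^\perp)\cap A\big|=\sum_m\mu(m)\,|m\cap A|.
\]
Here $m$ runs over distinct lines, and $\mu(m)\le|m\cap A|\lesssim k$ counts the points of $m\cap A$ at which the perpendicular line lies in $\mathcal L_k$. Suppose the rich lines come in perpendicular families, as in a union of rotated $k\times k$ grids. Then $\mu(m)\sim k$ and the lines $m$ are themselves $k$-rich. An incidence bound for $A$ against the distinct lines $m$ then returns $\sim k^2|\mathcal L_k|$, hence $k^3|\mathcal L_k|$ rectangles. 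The trivial bound $\sum_m|m\cap A|^2\lesssim n^2$ returns $n^2k$. These are precisely the two estimates that produced $17/7$.

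Getting below $17/7$ requires some additional input, for instance control of how often $k$-rich lines meet $k$-rich perpendicular lines at points of $A$. That is the content of \cite{LewRect}, which this paper imports without proof. Your claim that $|A|\le p^{26/21}$ is exactly the Stevens--de Zeeuw admissibility range is also asserted rather than checked.
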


The conjectured optimal Szemer\'edi--Trotter theorem in prime fields would give \eqref{eq:rect-prime} with exponent $2+\eps$.

It is convenient to organize a planar set according to its concentration on lines. Following \cite{LewBilinear}, a set $A\subset F^2$ is $k$-regular, for $k\le|A|^{1/2}$, if it is a disjoint union of $\sim k$ subsets, each lying on one of $\sim k$ distinct lines and containing $\sim|A|k^{-1}$ points, such that every line outside this frame of $\sim k$ lines meets $A$ in at most $k$ points. We call $A$ irregular if every line meets $A$ in at most $|A|^{1/2}$ points. The following records the estimates of Lemma 5.2 and Lemma 5.3 from \cite{LewBilinear}.

\begin{lemma}[{\cite{LewBilinear}}]\label{lem:regular}
Every $A\subset F^2$ is a disjoint union of $O(\log q)$ sets, one $k$-regular for each dyadic $k\le|A|^{1/2}$ together with one irregular set. If $A$ and $B$ are $k$-regular with $|A|\sim|B|\sim m$, or both irregular with $k\sim m^{1/2}$, then
\begin{equation}\label{eq:reg-R}
\Rcal(A)\lesssim m^2k,
\end{equation}
and
\begin{equation}\label{eq:reg-T}
\Tcal(A,B)\lesssim m^3k+m^4k^{-2}.
\end{equation}
\end{lemma}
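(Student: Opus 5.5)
The plan is to prove the decomposition by a greedy line-extraction argument, and then to reduce both counting bounds to sums of $|A\cap\ell|$ over lines $\ell$. In each such sum we separate the $\sim k$ frame lines, each carrying $\sim m/k$ points, from the remaining lines, each carrying at most $k$ points.

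\emph{Decomposition.} Process dyadic scales $k=|A|^{1/2},|A|^{1/2}/2,\dots$ in turn. At scale $k$, repeatedly remove from the current set a line containing more than $k$ of its points. The number of such lines is $\lesssim |A|/k$, and we pigeonhole dyadically in the number of points per line. This groups the removed lines into $O(\log q)$ families. Within a family all lines carry comparable numbers of points, and every remaining line meets the set in at most $k$ points. After relabelling the scales, each family is $k$-regular in the stated sense. Whatever survives the first scale, with every line meeting it in at most $|A|^{1/2}$ points, is the irregular piece. The main point to check is that the number of lines in a family matches the per-line count in the way the definition requires, namely $\sim k$ lines each with $\sim |A|k^{-1}$ points. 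This is handled by indexing the family by that per-line count rather than by the threshold, and it is where the bookkeeping is most delicate. I would follow the construction of Lemma 5.2 of \cite{LewBilinear}.

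\emph{Rectangles.} A rectangle $(x_0,x_1,x_2,x_3)$ is determined by $(x_0,x_1,x_2)$. Given $x_0\ne x_1$, the point $x_2$ lies on the line $\ell(x_0,x_1)$ through $x_1$ perpendicular to $x_1-x_0$. Since $-1$ is not a square, this perpendicular is a genuine transversal direction. Hence
\[
\Rcal(A)\le\sum_{x_0,x_1}|A\cap\ell(x_0,x_1)|,
\]
up to degenerate terms that contribute $O(m^2)$.

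We split this sum according to $\ell(x_0,x_1)$.
\begin{itemize}
\item If $\ell(x_0,x_1)$ is a non-frame line, the term is at most $k$, so these pairs contribute $\le m^2k$.
\item If $\ell(x_0,x_1)$ is a frame line $L\ni x_1$, then $x_0$ lies on the line through $x_1$ perpendicular to $L$.
\begin{itemize}
\item If that perpendicular is non-frame, there are at most $k$ choices of $x_0$ for each of the $m$ choices of $x_1$. Each such pair contributes $\sim m/k$, for a total of $\lesssim m^2$.
\item If the perpendicular is itself a frame line, then $x_1$ is one of the $\le k^2$ intersections of frame lines. Each such $x_1$ allows $\lesssim m/k$ choices of $x_0$ with weight $m/k$, again for a total of $\lesssim m^2$.
\end{itemize}
\end{itemize}
This gives \eqref{eq:reg-R}. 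In the irregular case every line carries at most $m^{1/2}=k$ points, so the first bullet alone gives the bound.

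\emph{Trapezoids.} Write
\[
\Tcal(A,B)=\sum_{d}N_A(d)N_B(d),\qquad N_A(d)=\sum_{\ell\parallel d}|A\cap\ell|^2,
\]
where $d$ ranges over the $q+1$ directions. Split $N_A=N_A^{\rm fr}+N_A^{\rm nf}$ according to whether $\ell$ is a frame line, and likewise for $B$.
\begin{itemize}
\item Non-frame lines carry at most $k$ points, so $N_A^{\rm nf}(d)\le k\sum_{\ell\parallel d}|A\cap\ell|= km$ for every $d$. Since $\sum_dN_B(d)\le m^2$, every term involving a non-frame factor is $\lesssim m^3k$.
\item For the frame--frame term, $\sum_dN_A^{\rm fr}(d)\lesssim k(m/k)^2$. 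Moreover, since at most $k$ frame lines are parallel to any given direction, $\max_dN_B^{\rm fr}(d)\lesssim k(m/k)^2$. The product is $m^4k^{-2}$.
\end{itemize}
Together these give \eqref{eq:reg-T}. The irregular case is identical, with the frame empty and $k=m^{1/2}$.

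I expect the only real obstacle to be the decomposition step, specifically making the $k$-regular pieces satisfy the exact counting conditions. The two incidence bounds are routine once the frame/non-frame split is in place.
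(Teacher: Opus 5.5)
\paragraph{The gap: the decomposition.} This is the only non-routine part of the lemma, and you flag it yourself. The procedure you sketch does not produce pieces satisfying the definitions. Your thresholds $|A|^{1/2},|A|^{1/2}/2,\dots$ are fixed relative to the original set, while both definitions refer to the piece itself. The sketch fails under either reading.
\begin{itemize}
\item \emph{If the survivor of the first scale is declared irregular,} you only know $|\ell\cap A_{\mathrm{irr}}|\le|A|^{1/2}$, not $|\ell\cap A_{\mathrm{irr}}|\le|A_{\mathrm{irr}}|^{1/2}$. Take $t$ points on one line and $\tfrac12 t^{1/2}$ points on another. The first line is removed, and the survivor is $m\sim t^{1/2}$ collinear points, for which $\Tcal\sim m^4\gg m^{7/2}$.
\item \emph{If the lower scales are processed,} a family of lines carrying $\sim r<|A|^{1/2}$ points is only known to have $\lesssim|A|/r$ members, and $|A|/r>r$. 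For example, $\sim q$ parallel lines each carrying $r$ random points, $\log q\ll r\ll q$, are removed at scale $\sim r$ as one family of $\sim q\gg(qr)^{1/2}$ lines. This violates $k\le|A_{\mathrm{fam}}|^{1/2}$. Re-indexing by the per-line count cannot fix it, since the failing condition is exactly that the number of frame lines be $\lesssim$ the points per frame line.
\end{itemize}

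\paragraph{The missing idea.} Adapt the threshold to the current remainder $R$.
\begin{itemize}
\item Remove $\ell\cap R$ whenever $|\ell\cap R|\ge|R|^{1/2}$, and stop when no such line exists. The final remainder is then irregular in its own right.
\item Group the removed lines by $|\ell\cap R|\in[2^j,2^{j+1})$ at the time of removal. When the first line of class $j$ is removed, the remainder $R_0$ satisfies $|R_0|\le|\ell\cap R_0|^2<4^{j+1}$.
\item The class then removes disjoint subsets of $R_0$, each of size $\ge2^j$, so it has $k<4\cdot 2^j$ lines. This is precisely $k\lesssim|A_{\mathrm{fam}}|^{1/2}$, with $\sim|A_{\mathrm{fam}}|/k$ points per line.
\item The non-frame condition is automatic, because a line outside the frame meets each of the $k$ frame lines at most once.
\end{itemize}
There is one class per dyadic $j$, hence $O(\log q)$ pieces.

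\paragraph{The counting bounds.} Your rectangle and trapezoid counts are correct in substance. The frame/non-frame split is the same one the paper uses in Proposition~\ref{prop:prime-B}. Two repairs are needed.
\begin{itemize}
\item \emph{Trapezoids.} Your $N_A(d)=\sum_{\ell\parallel d}|A\cap\ell|^2$ includes the $m$ diagonal pairs $(a,a)$ in each of the $q+1$ directions. So $\Tcal(A,B)=\sum_dN_A(d)N_B(d)$ is false, and $\sum_dN_B(d)=m^2+qm$, which leaves a spurious $qkm^2$ in your non-frame estimate. Replace $|A\cap\ell|^2$ by $|A\cap\ell|(|A\cap\ell|-1)$, i.e.\ take $N_A(d)=E_d(A)$ as in \eqref{eq:Etheta-def}. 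Your estimates then go through verbatim.
\item \emph{Rectangles.} Index case 2 by incidences $(L,x_1)$ rather than by $x_1$, since $x_1$ may lie on several frame lines. There are $\lesssim m$ such incidences and $\lesssim k^2$ ordered pairs of frame lines, and $|A\cap L|\lesssim m/k+k\lesssim m/k$. So your $\lesssim m^2$ bound for case 2 stands.
\end{itemize}
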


We next turn to the main bilinear tool from \cite{LewBilinear}, which should be thought of as a more delicate form of Lemma \ref{lem:slice}. Let $P=P_3$, let $g\sim1$ on $G\subset F^3$, and for $z\in F$ let $g_z$ be the restriction of $g$ to the slice at height $z$, with support $G_z\subset F^2$.

\begin{proposition}[{\cite[(4.2)]{LewBilinear}}]\label{prop:bilinear-ineq}
With the notation above,
\begin{equation}\label{eq:BiMT}
\|\widehat g\|_{L^2(P,d\sigma)}\lesssim|G|^{1/2}+|G|^{3/8}\Big(\sum_{z}\|g_z*K\|_4^2+\sum_{z\ne z'}\|g_z*K\,g_{z'}*K\|_2\Big)^{1/4},
\end{equation}
the linear term satisfies
\begin{equation}\label{eq:linear}
\|g_z*K\|_4^4\lesssim q^{-1}\Rcal(G_z),
\end{equation}
and the bilinear term satisfies
\begin{equation}\label{eq:bilinear}
\|g_z*K\,g_{z'}*K\|_2^2\lesssim q^{-1}|G_z||G_{z'}|+q^{-2}\Tcal(G_z,G_{z'})\qquad(z\ne z').
\end{equation}
\end{proposition}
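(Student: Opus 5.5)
The plan is to reduce all three assertions to explicit computations with the Gauss-sum formula for $K$. The first step is the inequality \eqref{eq:BiMT}. Write $h_z=g_z*K$, so $g*K=\sum_z h_z$. By the identity $\|\widehat g\|^2_{L^2(P,d\sigma)}=\langle g,g\rangle+\langle g,g*K\rangle$, Hölder's inequality, and $g\sim1_G$, we get
\[
\|\widehat g\|^2_{L^2(P,d\sigma)}\lesssim|G|+|G|^{3/4}\Big\|\sum_z h_z\Big\|_4 .
\]
Next expand $\|\sum_z h_z\|_4^2=\|(\sum_z h_z)^2\|_2$ and use the triangle inequality on
\[
\Big(\sum_z h_z\Big)^2=\sum_z h_z^2+\sum_{z\ne z'}h_zh_{z'} .
\]
This bounds $\|\sum_z h_z\|_4^2$ by the bracket appearing in \eqref{eq:BiMT}. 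Taking square roots twice then gives \eqref{eq:BiMT}.

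For the linear bound \eqref{eq:linear}, fix $z$ and write $x=(\underline x,z+t)$. Then $h_z$ vanishes when $t=0$, except at the single point handled by $\delta_0$. For $t\ne0$, the formula for $K$ and expansion of $|\underline x-\underline y|^2$ give
\[
|h_z(\underline x,z+t)|=q^{-1}\Big|\sum_{\underline y}g_z(\underline y)e\big(-|\underline y|^2/4t\big)e\big(\underline x\cdot\underline y/2t\big)\Big| .
\]
Raise this to the fourth power and sum over $\underline x\in F^2$. Orthogonality forces $y_1+y_2=y_3+y_4$ and leaves the phase $e(-Q/4t)$, where $Q=|y_1|^2+|y_2|^2-|y_3|^2-|y_4|^2$. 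Summing over $t\ne0$ gives $\sum_{t\ne0}e(-Q/4t)=q1_{Q=0}-1$. The quadruples with $y_1+y_2=y_3+y_4$ and $Q=0$ are exactly those with $(y_1-y_3)\cdot(y_1-y_4)=0$, i.e.\ rectangles. So the main term is at most $q^{-4}\cdot q^2\cdot q\cdot 2^4\Rcal(G_z)\lesssim q^{-1}\Rcal(G_z)$.

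The one delicate point is the $-1$ term, since $g_z$ is complex-valued. I would observe that its contribution is
\[
-q^{-4}\cdot q^2\sum_{y_1+y_2=y_3+y_4}a_1a_2\bar a_3\bar a_4=-q^{-4}\|\widehat a\|_4^4\le0,
\]
where $a=g_ze(-|\cdot|^2/4t)$ is suitably normalized. The modulation $e(-|\underline y|^2/4t)$ depends on $t$, so I will either pull it into $a$ before summing in $t$, or bound the $-1$ term by $q^{-2}\Lambda$-type quantities and check that these are dominated by degenerate rectangles. This sign/error-term bookkeeping is the step I expect to require the most care.

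For the bilinear bound \eqref{eq:bilinear}, take $z\ne z'$ and set $t=x_3-z$, $t'=x_3-z'$, so $t'-t$ is a fixed nonzero constant. Expand
\[
\sum_{\underline x}|h_z|^2|h_{z'}|^2
\]
with $y_1,y_3\in G_z$ and $y_2,y_4\in G_{z'}$. The $\underline x$-sum forces $(y_1-y_3)/2t+(y_2-y_4)/2t'=0$, with a prefactor $q^{-8}q^4q^2$.
\begin{itemize}
\item The diagonal $y_1=y_3$, $y_2=y_4$ occurs for every $t\ne0,z'-z$. It contributes $\lesssim q^{-2}\cdot q\,|G_z||G_{z'}|=q^{-1}|G_z||G_{z'}|$.
\item Off the diagonal, both differences must be nonzero and parallel. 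For each such configuration the ratio $\lambda=-t/t'$ determines $x_3$ uniquely, so the count is $\lesssim q^{-2}\Tcal(G_z,G_{z'})$.
\item Mixed degenerate cases, with exactly one difference zero, are impossible because the other difference would then have to vanish as well.
\end{itemize}
Together these give \eqref{eq:bilinear}.
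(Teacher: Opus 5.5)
Your proof is correct. The paper gives no proof of its own here: the statement is quoted from \cite[(4.2)]{LewBilinear}. The surrounding text indicates that \eqref{eq:linear} is the single-slice case of Lemma \ref{lem:slice} followed by the Plancherel/additive-energy step (written out as \eqref{eq:six-L4} in Section \ref{sec:six}), and it describes \eqref{eq:bilinear} as a more delicate form of that lemma.

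Your arguments for \eqref{eq:BiMT} and \eqref{eq:bilinear} are the direct verification one would expect. For \eqref{eq:bilinear} this means absolute bounds after $\underline x$-orthogonality, the diagonal contributing $q\,|G_z||G_{z'}|$, and nonzero parallel differences fixing $x_3$ through $\lambda=-t/t'$, with $\lambda=-1$ excluded since $z\ne z'$.

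For \eqref{eq:linear}, your computation and the slice-lemma route are the same identity in different coordinates. The bijection $(\underline x,t)\mapsto(\underline x/2t,-1/4t)$ of $F^2\times F^\times$ gives $|g_z*K(\underline x,z+t)|=q\,|(\widetilde g_z\,d\sigma)^\vee(\underline x/2t,-1/4t)|$. Hence $\|g_z*K\|_4^4$ is $q^4$ times the fourth moment of the lifted extension over $F^2\times F^\times$. Restoring the omitted plane only increases this, and Plancherel then gives $q^{-1}$ times a weighted count of additive quadruples on $P_3$, i.e.\ rectangles. Your $-1$ term is exactly minus the contribution of that omitted plane, which is why it has a sign. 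The slice-lemma organization simply avoids the bookkeeping you flagged.

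In your own coordinates the worry about $t$-dependence is also unfounded. If you sum over $t$ first, all $t$-dependence is used up in $\sum_{t\ne0}e(-Q/4t)$, and the $-1$ term is
$-q^{-2}\sum_{y_1+y_2=y_3+y_4}g_z(y_1)g_z(y_2)\overline{g_z(y_3)g_z(y_4)}=-q^{-2}\|g_z*g_z\|_2^2\le0$.
So your displayed formula is right with $a=g_z$, not $a=g_z\,e(-|\cdot|^2/4t)$.

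Drop the fallback of comparing this term with degenerate rectangles, because it fails. For $G_z=F^2$ the term has size $q^{-2}\Lambda(F^2)=q^4$, while $q^{-1}$ times the number of degenerate rectangles is only $O(q^3)$.
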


Set
\begin{equation}\label{eq:B-def}
\Bcal(A,B)=\min\big\{\Tcal(A,B),\,q(\Rcal(A)\Rcal(B))^{1/2}\big\}.
\end{equation}
Combining \eqref{eq:bilinear} with the Cauchy-Schwarz consequence of \eqref{eq:linear} gives $\|g_z*K\,g_{z'}*K\|_2^2\lesssim q^{-1}|G_z||G_{z'}|+q^{-2}\Bcal(G_z,G_{z'})$. Inserting the linear and bilinear bounds into \eqref{eq:BiMT}, for regular $g\sim1_G$ with $w$ nonempty slices each of size $m\sim|G|w^{-1}$, and bounding the diagonal slice contribution by $q|G|$, gives
\begin{equation}\label{eq:biMT-recall}
\|\widehat g\|_{L^2(P,d\sigma)}\lesssim|G|^{1/2}+|G|^{3/8}q^{-1/8}\Big(\sum_{z}\Rcal(G_z)^{1/2}+q^{-1/2}\sum_{z\ne z'}\Bcal(G_z,G_{z'})^{1/2}+q|G|\Big)^{1/4}.
\end{equation}
Since $\Tcal$ counts only nondegenerate parallel configurations, the degenerate ones, numbering $\lesssim|G_z||G_{z'}|$, are already accounted for by the term $q^{-1}|G_z||G_{z'}|$ in \eqref{eq:bilinear}, while the diagonal slices $z=z'$ are accounted for by the term $q|G|$ in \eqref{eq:biMT-recall}.

The quantity \eqref{eq:B-def} is effective because a pair of sets cannot have both many trapezoids and many rectangles. This is made quantitative by the following estimate of \cite{LewBilinear}, which is the source of the arbitrary-field exponent $\frac{32}{9}$.

\begin{proposition}[{\cite[Prop. 5.4]{LewBilinear}}]\label{prop:general-B}
Let $A,B\subset F^2$ be $k$-regular sets, or both irregular sets, with $|A|\sim|B|\sim m$. Then
\begin{equation}\label{eq:general-B}
\Bcal(A,B)\lesssim m^{8/3}q^{2/3}+m^{7/2}.
\end{equation}
\end{proposition}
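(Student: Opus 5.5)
Since $\Bcal(A,B)$ is the minimum of $\Tcal(A,B)$ and $q(\Rcal(A)\Rcal(B))^{1/2}$, it is bounded by any weighted geometric mean of the two quantities. I would therefore take the bounds available from Lemma \ref{lem:regular} and Lemma \ref{lem:rect} and optimise over $k$. Assume first that $A$ and $B$ are $k$-regular with $|A|\sim|B|\sim m$. By \eqref{eq:reg-T},
\[
\Tcal(A,B)\lesssim m^3k+m^4k^{-2}.
\]
By \eqref{eq:reg-R} applied to each set,
\[
q(\Rcal(A)\Rcal(B))^{1/2}\lesssim q m^2 k.
\]
The irregular case is the same computation with $k\sim m^{1/2}$, so it needs no separate treatment.

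Next I split on which term of the trapezoid bound dominates.

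\textbf{Case $k\ge m^{1/3}$.} Then $m^4k^{-2}\le m^3k$, so $\Tcal\lesssim m^3k\le m^{7/2}$, because $k\le m^{1/2}$ by the definition of $k$-regularity. This already gives the $m^{7/2}$ term. The rectangle bound plays no role in this case.

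\textbf{Case $k\le m^{1/3}$.} Then $\Tcal\lesssim m^4k^{-2}$. Take the geometric mean with weights $1/3$ and $2/3$, chosen so that $k$ cancels:
\[
\Bcal\le (m^4k^{-2})^{1/3}(qm^2k)^{2/3}=q^{2/3}m^{4/3+4/3}=q^{2/3}m^{8/3}.
\]
This gives the first term of \eqref{eq:general-B}.

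The only step needing care is that \eqref{eq:reg-R} is stated to hold uniformly in $k\le m^{1/2}$, including small $k$. I take this from the cited lemma. If it were only valid above some threshold, I would fall back on \eqref{eq:rect-CS}, i.e.\ $\Rcal\lesssim m^{5/2}$, for small $k$, and recheck the balance there. Given the lemma as stated, the estimate is just the two-case optimisation above.
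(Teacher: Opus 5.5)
Your argument is correct and essentially matches the paper's. The proof is cited from \cite{LewBilinear}, but its structure is reproduced in the proof of Proposition~\ref{prop:prime-B}: split at $k_*=m^{2/3}q^{-1/3}$, use $qm^2k\le m^{8/3}q^{2/3}$ for $k\le k_*$, and use $m^3k+m^4k^{-2}\le m^{7/2}+m^{8/3}q^{2/3}$ for $k\ge k_*$. Your weighted geometric mean $\min\{X,Y\}\le X^{1/3}Y^{2/3}$ does exactly this balancing without naming the threshold, and your small-$k$ worry does not arise, since \eqref{eq:reg-R} holds for every $k$.
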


Inserting Proposition \ref{prop:general-B} into \eqref{eq:biMT-recall} and combining with the standard estimates yields, for regular $g\sim1_G$,
\begin{equation}\label{eq:corrected-master}
\|\widehat g\|_{L^2(P,d\sigma)}\lesssim|G|^{1/2}+|G|^{11/16}q^{-1/8}+q^{-3/16}|G|^{13/16}+|G|^{17/24}+q^{1/8}|G|^{5/8},
\end{equation}
and from this the table
\begin{equation}\label{eq:corrected-table}
\|\widehat g\|_{L^2(P,d\sigma)}\lesssim|G|^{1/2}+\left\{\begin{array}{ll}
|G|q^{-1/2},& |G|\le q^{12/7},\\[2mm]
|G|^{17/24},& q^{12/7}\le|G|\le q^{9/5},\\[2mm]
q^{-3/16}|G|^{13/16},& q^{9/5}\le|G|\le q^2,\\[2mm]
q^{3/16}|G|^{5/8},& q^2\le|G|\le q^{5/2},\\[2mm]
q^{1/2}|G|^{1/2},& q^{5/2}\le|G|\le q^3.
\end{array}\right.
\end{equation}
The table is to be read after the dyadic slice-size and regularity reductions; the triangle-inequality sum over the resulting $O(\log^2 q)$ pieces is absorbed into the $q^\eps$ loss used in Lemma \ref{lem:eps-removal}. Every term of \eqref{eq:corrected-table} is bounded by $|G|^{23/32}$, so $R^*(2\to r)\lesssim_r1$ for $r>\frac{32}{9}$ in arbitrary fields of odd characteristic in which $-1$ is not a square.

In prime fields the rectangle bound \eqref{eq:rect-prime} improves the input still further, and the argument of \cite{LewRect} gives
\begin{equation}\label{eq:LewRect-prime-table}
\|\widehat g\|_{L^2(P,d\sigma)}\lesssim_\eps|G|^{1/2}+\left\{\begin{array}{ll}
|G|p^{-1/2},& |G|\le p^{94/53},\\[2mm]
p^{3/41}|G|^{111/164+\eps},& p^{94/53}\le|G|\le p^{75/34},\\[2mm]
p^{3/16}|G|^{5/8},& p^{75/34}\le|G|\le p^{5/2},\\[2mm]
p^{1/2}|G|^{1/2},& p^{5/2}\le|G|\le p^3.
\end{array}\right.
\end{equation}
This gives $R^*(2\to r)\lesssim_r1$ for every $r>\frac{188}{53}$, the prime-field estimate of \cite{LewRect}.

\section{The prime-field improvement}\label{sec:prime}

In this section $F=\F_p$ and $-1$ is not a square. The general field argument of Section \ref{sec:background} identifies the obstruction in \eqref{eq:corrected-master} as the term $q^{-3/16}|G|^{13/16}$, which arises from the $m^{7/2}$ summand of Proposition \ref{prop:general-B}. We improve this in prime fields by using a direction-sensitive point-line incidence theorem of Lund, Pham and Vinh \cite{LPV}.

Two nonzero vectors of $F^2$ have the same direction if one is a scalar multiple of the other, and a line has the direction of any nonzero vector along it. There are $p+1$ directions in all. For $A\subset F^2$ and a direction $\theta$ we set
\begin{equation}\label{eq:Etheta-def}
E_\theta(A)=|\{(a,a')\in A^2:a-a'\text{ has direction }\theta\}|,
\end{equation}
which counts ordered pairs of distinct points, since $0$ has no direction. With the trapezoid count of Section \ref{sec:background} restricted to nondegenerate configurations, this gives the identity $\Tcal(A,B)=\sum_\theta E_\theta(A)E_\theta(B)$. More generally, for a family $\mathcal L$ of lines we write $E_{\theta,\mathcal L}(A)$ for the number of ordered pairs $(a,a')\in A^2$ that lie on a common line of $\mathcal L$ of direction $\theta$. We refer to $\sum_\theta E_{\theta,\mathcal L}(A)^2$, and to $\sum_\theta E_\theta(A)^2$ when $\mathcal L$ is the family of all lines, as the directional energy of $A$. By Cauchy-Schwarz in $\theta$ the trapezoid count satisfies $\Tcal(A,B)\le\big(\sum_\theta E_\theta(A)^2\big)^{1/2}\big(\sum_\theta E_\theta(B)^2\big)^{1/2}$, so a bound on the directional energy can be used to bound the trapezoid count.

For a finite set $A\subset\F_p^2$ and a finite family $\mathcal L$ of lines, write
\[
I(A,\mathcal L)=\big|\{(a,\ell)\in A\times\mathcal L:a\in\ell\}\big|
\]
for the number of incidences, and let $t$ denote the number of distinct directions spanned by the lines of $\mathcal L$. We use two point-line incidence bounds, one of Lund, Pham and Vinh and one of Vinh.

\begin{lemma}[\cite{LPV}]\label{lem:LPV}
Let $A\subset\F_p^2$ with $|A|=m$ and let $\mathcal L$ be a family of $L$ lines spanning $t$ directions, with $t\lesssim p^2/m$. Then
\begin{equation}\label{eq:LPV}
I(A,\mathcal L)\lesssim t^{2/9}L^{5/9}m^{7/9}+t^{1/2}L^{1/2}m^{1/2}+L^{2/3}m^{2/3}+L+m.
\end{equation}
\end{lemma}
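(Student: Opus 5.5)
The statement to prove is Lemma \ref{lem:LPV}, the incidence bound of Lund, Pham and Vinh. It is quoted from \cite{LPV}, so in the paper it is used as a black box. The plan below is how I would reprove it. The strategy is to reduce to a point-plane incidence problem in $\F_p^3$ and apply Rudnev's theorem \cite{R}, while tracking how the small number $t$ of directions constrains the configuration.

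\emph{Step 1: split by direction.} Partition $\mathcal L=\bigcup_{j\le t}\mathcal L_j$, where $\mathcal L_j$ consists of the lines of $\mathcal L$ with direction $\theta_j$. Lines in one class are parallel, so each point of $A$ lies on at most one line of each class, and
\[
I(A,\mathcal L)=\sum_j I(A,\mathcal L_j).
\]

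\emph{Step 2: count collinear pairs and pass to energy.} By Cauchy--Schwarz,
\[
I(A,\mathcal L)^2\le L\sum_{\ell}|A\cap\ell|^2 .
\]
The sum $\sum_\ell|A\cap\ell|^2$ counts pairs $(a,a')\in A^2$ on a common line of $\mathcal L$. Off the diagonal, such pairs are exactly those counted by $\sum_\theta E_{\theta,\mathcal L}(A)$, with $\theta$ ranging over only $t$ directions. A second Cauchy--Schwarz in $\theta$ gives
\[
\sum_\theta E_{\theta,\mathcal L}(A)\le t^{1/2}\Big(\sum_\theta E_{\theta,\mathcal L}(A)^2\Big)^{1/2}.
\]
This is the mechanism that produces the $t$-dependence.

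\emph{Step 3: bound the directional energy by Rudnev.} The quantity $\sum_\theta E_\theta(A)^2$ counts quadruples $(a,a',b,b')$ with $a-a'\parallel b-b'$. The parallelism condition is the vanishing of a $2\times2$ determinant, which is bilinear. It can therefore be encoded as incidences between $\sim m^2$ points $(a,b)$-type objects and $\sim m^2$ planes in $\F_p^3$: fix a coordinate normalization of the direction and use one of the points as the parameter. Rudnev's bound $I\lesssim m^{2}\cdot m^{1}+km^2$ then controls the energy. Here $k$ is the maximal collinearity, and its degenerate contribution comes from many points of $A$ on a line; that contribution is exactly what the $t^{1/2}L^{1/2}m^{1/2}$ and $L$ terms absorb. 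The hypothesis $t\lesssim p^2/m$ is what is needed for Rudnev's condition (number of points $\lesssim p^2$) after restricting to the $t$ directions.

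\emph{Step 4: combine.} The final step is to restrict Step 3 to the lines in $\mathcal L$ and optimize. The natural interpolation is between the trivial bound $E\le mL$ and the Rudnev-type bound, balanced by Hölder with exponents chosen so that $t^{2/9}L^{5/9}m^{7/9}$ emerges. The terms $L^{2/3}m^{2/3}+L+m$ come from the Stevens--de Zeeuw/Vinh-range Szemerédi--Trotter fallback when $t$ is large. The main obstacle is Step 3: making the point-plane encoding sensitive to the restricted line family $\mathcal L$, rather than to all of $A$'s directions, while controlling the collinear degenerate term. If a direct encoding fails, I would first try dyadically pigeonholing the lines by richness $|A\cap\ell|\sim s$ and applying the energy bound to each rich class separately.
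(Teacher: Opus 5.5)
As you note, the paper gives no proof of this lemma. It is \cite[Theorem~1.5]{LPV}, whose first term is $\min\{t^{3/10}L^{3/5}m^{7/10},\,t^{2/9}L^{5/9}m^{7/9}\}$, and the paper simply keeps the second entry. The citation plus $\min\{X,Y\}\le Y$ is the whole argument. Your reproof does not work: Steps~1--2 are harmless Cauchy--Schwarz, but the central Step~3 fails.

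\textbf{Step~3: the encoding.} Expanding, $\det(a-a',b-b')=0$ reads $\det(a,b)+\det(a',b')=\det(a,b')+\det(a',b)$. This is a bilinear form in $(a,b')\in\F_p^4$ and $(a',b)\in\F_p^4$, plus terms depending on one group only. That is a point--hyperplane problem in four (after lifting, five) dimensions. Rudnev's point--plane theorem does not apply to it, and ``normalizing the direction'' does not produce a three-dimensional configuration.

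\textbf{Step~3: the conclusion.} The bound you draw cannot be obtained this way.
\begin{itemize}
\item If $k$ is collinearity in $A$, as your remark on the degenerate term suggests, then $\sum_\theta E_\theta(A)^2\lesssim m^3+km^2$ is false: $m$ collinear points give $\sim m^4$.
\item For $A$ with no line containing more than $m^{1/2}$ of its points, it would say $\sum_\theta E_\theta(A)^2\lesssim m^3$. This is a Szemer\'edi--Trotter-strength statement not known over $\F_p$.
\item In the paper the logic runs the other way. Lemma~\ref{lem:directional-energy} is deduced from \eqref{eq:LPV} and yields only $m^3\min(m,p)^{1/3}+m^4/p+pm^2$. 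An $m^3$ bound would feed into Proposition~\ref{prop:prime-B} and improve Theorem~\ref{thm:prime-main}.
\item Your own Step~2 exposes this. For such $A$, inserting $m^3$ gives $I(A,\mathcal L)\lesssim L+t^{1/4}L^{1/2}m^{3/4}$, and since $t\le L$,
\[
t^{1/4}L^{1/2}m^{3/4}=\Big(\frac{t}{L^2m}\Big)^{1/36}t^{2/9}L^{5/9}m^{7/9}\le(Lm)^{-1/36}\,t^{2/9}L^{5/9}m^{7/9}.
\]
So the route would prove something strictly stronger than the theorem it reproduces.
\end{itemize}

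\textbf{The hypothesis and Step~4.} The hypothesis $t\lesssim p^2/m$ never genuinely enters. With $\sim m^2$ encoded points, Rudnev's range condition would read $m\lesssim p$, which has nothing to do with $t$. By contrast, $tm\lesssim p^2$ has the shape of a range condition on a configuration of size $tm$. For instance, under point--line duality the $L$ lines become $L$ points on $t$ parallel lines and the $m$ points become $m$ lines; that is where the direction count has to be used, not only in a final Cauchy--Schwarz over $\theta$.

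Step~4 asserts a H\"older interpolation ``with exponents chosen so that $t^{2/9}L^{5/9}m^{7/9}$ emerges'' without exhibiting one. The term $L^{2/3}m^{2/3}$ also cannot come from Stevens--de Zeeuw or Vinh, which give $m^{11/15}L^{11/15}$ and $mL/p+(pmL)^{1/2}$ respectively. What you have is an outline whose key estimate is unavailable. For this paper the correct proof is the citation of \cite{LPV}.
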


The direction-sensitive incidence theorem of Lund, Pham and Vinh \cite[Theorem~1.5]{LPV} gives the same estimate with
\[
\min\{t^{3/10}L^{3/5}m^{7/10},\,t^{2/9}L^{5/9}m^{7/9}\}
\]
in place of the first term on the right side of \eqref{eq:LPV}. Since we only need an upper bound, we replace this minimum by its second entry. Next we recall Vinh's well-known incidence estimate.

\begin{lemma}[\cite{Vinh}]\label{lem:vinh}
For every $A\subset\F_p^2$ with $|A|=m$ and every family $\mathcal L$ of $L$ lines,
\begin{equation}\label{eq:vinh-inc}
I(A,\mathcal L)\lesssim \frac{mL}{p}+(pmL)^{1/2}.
\end{equation}
\end{lemma}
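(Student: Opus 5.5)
We will deduce \eqref{eq:vinh-inc} from the spectral structure of the point-line incidence matrix of the projective plane over $\F_p$, using an expander-mixing argument. No earlier lemma of the paper is needed.

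\emph{Setup.} Embed $\F_p^2$ into the projective plane $PG(2,p)$. Then $A$ is a set of $m$ projective points and $\mathcal L$ is a set of $L$ projective lines, and incidences are unchanged. The plane $PG(2,p)$ has $N=p^2+p+1$ points and $N$ lines. Let $M$ be the $N\times N$ incidence matrix, with rows indexed by points and columns by lines, so that
\[
I(A,\mathcal L)=1_A^{T}M\,1_{\mathcal L}.
\]

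\emph{Spectrum.} We compute $MM^T$. Every point lies on $p+1$ lines, and two distinct points lie on exactly one common line. Hence
\[
MM^T=pI+J,
\]
where $J$ is the all-ones matrix. The constant vector is an eigenvector with eigenvalue $p+N=(p+1)^2$. Every vector orthogonal to the constants is an eigenvector with eigenvalue $p$. Consequently, for $f\perp\mathbf 1$ we get $\|M^Tf\|_2^2=f^T MM^T f=p\|f\|_2^2$, that is, $\|M^Tf\|_2=p^{1/2}\|f\|_2$.

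\emph{Mixing.} Write $1_A=\frac{m}{N}\mathbf 1+f$ with $f\perp\mathbf 1$. Then $\|f\|_2\le\|1_A\|_2=m^{1/2}$. Each line contains $p+1$ points, so $\mathbf 1^TM\,1_{\mathcal L}=(p+1)L$. Therefore
\[
I(A,\mathcal L)=\frac{m(p+1)L}{N}+\langle M^Tf,1_{\mathcal L}\rangle .
\]
By Cauchy-Schwarz and the spectral bound above,
\[
|\langle M^Tf,1_{\mathcal L}\rangle|\le \|M^Tf\|_2\,\|1_{\mathcal L}\|_2\le p^{1/2}m^{1/2}L^{1/2}.
\]
The main term satisfies $\frac{m(p+1)L}{N}\le \frac{mL}{p}$. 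Combining the two bounds gives \eqref{eq:vinh-inc}.

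The only real content is the identity $MM^T=pI+J$. It relies on two facts: through two distinct points there is exactly one line, and each point lies on $p+1$ lines. Passing to the projective plane is what makes the count exact, because there every pair of points is joined by a line, which need not hold within $\F_p^2$ alone after discarding the line at infinity. All the other steps are routine linear algebra.
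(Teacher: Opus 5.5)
Your proof is correct. The identity $MM^T=pI+J$ for the point--line incidence matrix of $PG(2,p)$, combined with the decomposition $1_A=\frac mN\mathbf 1+f$, Cauchy--Schwarz, and $\frac{p+1}{p^2+p+1}\le\frac1p$, yields the bound with constant $1$. The paper does not prove this lemma but only cites Vinh, and your argument is essentially Vinh's original spectral (expander-mixing) proof.
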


We will apply \eqref{eq:vinh-inc} to line families spanning $t\gtrsim p^2/m$ directions, outside the range of \eqref{eq:LPV}. We now put this together to obtain an estimate on the directional energy.

\begin{lemma}\label{lem:directional-energy}
Let $A\subset\F_p^2$ with $|A|=m$, and let $\mathcal L$ be a family of lines each meeting $A$ in at most $m^{1/2}$ points. Then for every $\eps>0$,
\begin{equation}\label{eq:dir-energy}
\sum_\theta E_{\theta,\mathcal L}(A)^2\lesssim_\eps p^\eps\Big(m^3\min(m,p)^{1/3}+\frac{m^4}{p}+pm^2\Big).
\end{equation}
\end{lemma}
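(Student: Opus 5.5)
The plan is to decompose the directional energy dyadically and bound each dyadic piece by the incidence bounds, Lemma \ref{lem:LPV} or Lemma \ref{lem:vinh}, with the pigeonholing losses absorbed into $p^\eps$.

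\textbf{Decomposition.} For a direction $\theta$, write $E_{\theta,\mathcal L}(A)\le\sum_{\ell\in\mathcal L,\ \ell\parallel\theta}|\ell\cap A|^2$. Dyadically pigeonhole in three parameters:
\begin{itemize}
\item the richness $k$ of lines, so $|\ell\cap A|\sim k$ with $2\le k\le m^{1/2}$ by hypothesis;
\item the number $n$ of $k$-rich lines of $\mathcal L$ in direction $\theta$;
\item the resulting set $\Theta$ of $t$ directions.
\end{itemize}
This costs $O(\log^3 p)\lesssim_\eps p^\eps$ and reduces matters to bounding
\[
X:=t\,(nk^2)^2=tn^2k^4
\]
for a family $\mathcal L'$ of $L=tn$ lines, each $k$-rich, spanning exactly $t$ directions, with $I(A,\mathcal L')\sim tnk$. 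Disjointness of parallel lines gives the trivial constraints $nk\le m$ and $t\le p+1$. Note that a single direction already contributes at most $(nk^2)^2\le m^2k^2\le m^3$.

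\textbf{Small direction sets, $t\lesssim p^2/m$.} Here Lemma \ref{lem:LPV} applies. We have $tnk\lesssim$ the largest of its five terms, and I would run through them one at a time.
\begin{itemize}
\item The terms $L$ and $m$ force $k\lesssim1$ or $tnk\lesssim m$. In the second case $X\le(tnk)(nk)k^2\le m\cdot m\cdot m=m^3$, which is admissible.
\item The term $L^{2/3}m^{2/3}$ gives $tnk^3\lesssim m^2$, so $X=(tnk^3)(nk)\lesssim m^3$.
\item The term $t^{1/2}L^{1/2}m^{1/2}$ gives $tnk^2\lesssim m$ after squaring, which is again stronger than needed.
\item The main term gives $t^2n^4k^9\lesssim m^7$.
\end{itemize}
The main term must be combined with $t\le\min(p,p^2/m)$, $nk\le m$ and $k\le m^{1/2}$. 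Writing $X^{a}X^{b}$ as a product of the available constraints and optimizing over $k$, I expect the extremal configuration to give $X\lesssim m^3\min(m,p)^{1/3}$. The bound $\min(m,p)$ should enter through $t\lesssim p$ when $m\ge p$, and through $t\lesssim p^2/m$ when $m\le p$.

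\textbf{Large direction sets, $t\gtrsim p^2/m$.} Here Lemma \ref{lem:vinh} gives
\[
tnk\lesssim \frac{tnm}{p}+(pmtn)^{1/2}.
\]
If the first term dominates, then $k\lesssim m/p$, and so
\[
X\le t\,(nk)^2k^2\lesssim p\,m^2\frac{m^2}{p^2}=\frac{m^4}{p}.
\]
If the second term dominates, then $tnk^2\lesssim pm$, and combining this with $nk\le m$ and $t\gtrsim p^2/m$ should yield the $pm^2$ term, possibly after also using the first regime's bound. The choice of threshold $t\sim p^2/m$ matches exactly the hypothesis of Lemma \ref{lem:LPV}, so the two regimes together cover all cases.

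\textbf{Main obstacle.} The delicate step is the exponent bookkeeping for the main LPV term in the first regime. One must check that the constraint $t^2n^4k^9\lesssim m^7$, together with $nk\le m$, $k\le m^{1/2}$ and $t\lesssim\min(p,p^2/m)$, really caps $X$ at $m^3\min(m,p)^{1/3}$ rather than something weaker. Cruder combinations give only bounds like $t^{1/5}m^{16/5}$, so the dependence on $k$ must be handled carefully. I would first try splitting at the value of $k$ where $t^2n^4k^9\lesssim m^7$ and $X\le tm^2k^2$ balance. If that does not close, I would instead use the minimum form $t^{3/10}L^{3/5}m^{7/10}$ from \cite{LPV} in the range where it is smaller.
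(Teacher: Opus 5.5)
Your $(k,n,t)$ decomposition and your treatment of the many-direction regime are essentially the paper's. The step you flag as the main obstacle, however, is a genuine gap, and the constraints you list cannot close it no matter how carefully the $k$-dependence is handled. Put $T=\min(p,p^2/m)$. From $t^2n^4k^9\lesssim m^7$, $nk\le m$ and $t\le T$ you get $X\lesssim\min\{Tm^2k^2,\,m^{7/2}k^{-1/2}\}\le T^{1/5}m^{16/5}$, and nothing better follows. The choice $t=T$, $nk=m$, $k=m^{3/5}T^{-2/5}$ satisfies all your constraints, including $1\le k\le m^{1/2}$ in the ranges below, and has $X=T^{1/5}m^{16/5}$. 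So the bound you call crude is the true maximum under those constraints.
\begin{itemize}
\item For $p^{2/3}<m<p$ this maximum is $p^{1/5}m^{16/5}$, which exceeds $m^{10/3}+m^4/p+pm^2$ by a power of $p$. For example, $m=p^{9/10}$, $k=p^{7/50}$ gives $X=p^{77/25}$ against $m^{10/3}=p^3$.
\item For $p<m<p^{7/5}$ it is $p^{2/5}m^3$, which exceeds $m^3p^{1/3}+m^4/p+pm^2$.
\end{itemize}
Your fallback does not rescue this: these configurations also satisfy the other LPV bound $t^{3/10}L^{3/5}m^{7/10}$, which reads $tn^4k^{10}\lesssim m^7$. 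Also, you have reversed which of $t\lesssim p$ and $t\lesssim p^2/m$ is the binding one for $m\ge p$ versus $m\le p$. In the actual argument $\min(m,p)$ enters through a richness threshold, not through the bound on $t$.

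Two further inputs are needed, and the paper uses both.
\begin{itemize}
\item \emph{Total pair count.} There are at most $m^2$ ordered pairs, so $tnk^2\lesssim m^2$ and hence $X=(tnk^2)(nk^2)\lesssim m^3k$. The paper implements this by splitting off the lines with at most $\tau=\min(m,p)^{1/3}$ points. For these, $E_{\theta,\mathcal L^-}(A)\le\tau m$ and $\sum_\theta E_{\theta,\mathcal L^-}(A)\le m^2$ give $m^3\tau$. For $m\le p$, use this bound when $k\le m^{1/3}$ and your LPV consequence $X\lesssim m^{7/2}k^{-1/2}$ when $k\ge m^{1/3}$; together they give $m^{10/3}$.
\item \emph{Vinh in the few-direction regime.} Vinh's estimate holds for every line family, so there it must be applied alongside LPV. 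Its first term gives $X\lesssim m^4/p$. Its second gives $tnk^2\lesssim pm$, hence $X\lesssim pm^2k$. Balancing $pm^2k$ against $m^{7/2}k^{-1/2}$ at $k=mp^{-2/3}$ gives $m^3p^{1/3}$, which is what is needed when $m\ge p$.
\end{itemize}
Neither input alone works across the whole range.

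There are also some smaller slips, all harmless:
\begin{itemize}
\item The term $t^{1/2}L^{1/2}m^{1/2}$ gives $nk^2\lesssim m$, not $tnk^2\lesssim m$, so $X\le tm^2\le pm^2$.
\item You left out the case $k\lesssim1$, which gives $X\lesssim tn^2\le pm^2$.
\item In the many-direction regime the second Vinh term gives $nk^2\lesssim pm/t\lesssim m^2/p$, so $X\lesssim m^4/p$ rather than $pm^2$.
\end{itemize}
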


\begin{proof}
We may take $\mathcal L$ to be the family of all lines meeting $A$ in at most $m^{1/2}$ points, since enlarging $\mathcal L$ only increases the left-hand side. Put $\mu=\min(m,p)$. We sort these lines into classes, bound the directional energy contributed by each class, and sum. For dyadic parameters $j$ and $s$, the $(j,s)$-class consists of the lines meeting $A$ in between $j$ and $2j$ points whose direction contains between $s$ and $2s$ such lines. There are $O(\log^2m)$ classes, so up to a factor $m^\eps$, which we absorb into $p^\eps$ since $m\le p^2$, it suffices to bound the contribution of each class.

Fix a $(j,s)$-class, and let $t$ be the number of directions it spans. It has $\sim s$ lines in each of these directions and $\sim ts$ lines in all, each containing $\sim j$ points of $A$. A single direction of the class contributes $\sim sj^2$ ordered pairs, so the class contributes
\begin{equation}\label{eq:class-energy}
t\,(sj^2)^2=ts^2j^4
\end{equation}
to $\sum_\theta E_{\theta,\mathcal L}(A)^2$. We will use three constraints repeatedly. There are at most $p+1$ directions, so $t\le p$; the $s$ lines of a single direction are parallel and disjoint, so $sj\lesssim m$; and $j\le m^{1/2}$ which follows from the hypothesis.

We first dispose of the lines meeting $A$ in at most $\tau=\mu^{1/3}$ points. Writing $\mathcal L^-$ for these lines and $\mathcal L^+$ for the rest, $E_{\theta,\mathcal L}(A)=E_{\theta,\mathcal L^-}(A)+E_{\theta,\mathcal L^+}(A)$ for every $\theta$, so it suffices to bound the two parts separately. In a given direction the lines of $\mathcal L^-$ are parallel and disjoint and together containing at most $m$ points, and each contributes at most $\tau$ ordered pairs per point, so $E_{\theta,\mathcal L^-}(A)\le\tau m$. Since also $\sum_\theta E_{\theta,\mathcal L^-}(A)\le m^2$,
\begin{equation}\label{eq:low-rich}
\sum_\theta E_{\theta,\mathcal L^-}(A)^2\le\tau m\sum_\theta E_{\theta,\mathcal L^-}(A)\le\tau m\cdot m^2=m^3\mu^{1/3}.
\end{equation}

There remain the $(j,s)$-classes with $j\ge\tau$. In such a class each of the $\sim ts$ lines carries at least $j$ points, so
\begin{equation}\label{eq:incidence-lower}
I(A,\mathcal L)\gtrsim jts.
\end{equation}
Whichever incidence bound applies, at least one term on its right side is $\gtrsim jts$. Suppose first that the class spans few directions, $t\lesssim p^2/m$, so that \eqref{eq:LPV} applies. If the term $t^{2/9}(ts)^{5/9}m^{7/9}$ in \eqref{eq:LPV} is $\gtrsim jts$, then
\begin{equation}\label{eq:LPV-class}
jts\lesssim t^{2/9}(ts)^{5/9}m^{7/9}\ \Longrightarrow\ t\lesssim m^{7/2}j^{-9/2}s^{-2}\ \Longrightarrow\ ts^2j^4\lesssim m^{7/2}j^{-1/2}.
\end{equation}
At the same time, Vinh's estimate \eqref{eq:vinh-inc} applies to this same class. If the term $mL/p=mts/p$ in \eqref{eq:vinh-inc} is $\gtrsim jts$, then $j\lesssim m/p$, and using $t\le p$ and $sj\lesssim m$ gives
\[
ts^2j^4\le p(m/j)^2j^4=pm^2j^2\lesssim \frac{m^4}{p}.
\]
If the term $(pmL)^{1/2}=(pmts)^{1/2}$ in \eqref{eq:vinh-inc} is $\gtrsim jts$, then $ts\lesssim pm/j^2$, and hence, using $sj\lesssim m$,
\[
ts^2j^4\lesssim (pmj^{-2})(mj^{-1})j^4=pm^2j.
\]
Consequently, in the case where the term $t^{2/9}(ts)^{5/9}m^{7/9}$ in \eqref{eq:LPV} is $\gtrsim jts$, we have
\begin{equation}\label{eq:LPV-vinh-combined}
ts^2j^4\lesssim \frac{m^4}{p}+\min\{m^{7/2}j^{-1/2},\,pm^2j\}.
\end{equation}
If $m\le p$, then $j\ge\mu^{1/3}=m^{1/3}$ and the minimum in \eqref{eq:LPV-vinh-combined} is at most $m^{10/3}=m^3\mu^{1/3}$. If $m\ge p$, put $j_0=mp^{-2/3}$. Since $j_0\ge p^{1/3}=\mu^{1/3}$, either $j\le j_0$, in which case $pm^2j\le m^3p^{1/3}$, or $j\ge j_0$, in which case $m^{7/2}j^{-1/2}\le m^3p^{1/3}$. Thus this case contributes at most
\[
\frac{m^4}{p}+m^3\mu^{1/3}.
\]

We next handle the other terms in \eqref{eq:LPV}. If the term $t^{1/2}(ts)^{1/2}m^{1/2}$ is $\gtrsim jts$, then $s\lesssim m/j^2$, so $ts^2j^4\le tm^2\le pm^2$. If the term $(ts)^{2/3}m^{2/3}$ is $\gtrsim jts$, then $ts\lesssim m^2j^{-3}$, so $ts^2j^4\lesssim m^2(sj)\le m^3\le m^3\mu^{1/3}$. If the term $m$ is $\gtrsim jts$, then $ts^2j^4=(jts)(sj^3)\le m^3\le m^3\mu^{1/3}$, using $sj\lesssim m$ and $j\le m^{1/2}$. Finally, the $L$ term in \eqref{eq:LPV} is $L=ts$ for the present class. If this term is $\gtrsim jts$, then $j\lesssim1$; using $sj\lesssim m$ and $t\le p$ gives $ts^2j^4\lesssim ts^2\le t(m/j)^2\lesssim pm^2$. Thus each class with few directions satisfies
\begin{equation}\label{eq:few-dir}
ts^2j^4\lesssim m^3\mu^{1/3}+\frac{m^4}{p}+pm^2.
\end{equation}

Suppose instead that the class spans many directions, $t\gtrsim p^2/m$, which is outside the range of \eqref{eq:LPV}. Here \eqref{eq:vinh-inc} applies with $L=ts$, and by \eqref{eq:incidence-lower} one of its two terms is $\gtrsim jts$. If the term $mL/p=mts/p$ is $\gtrsim jts$, then $j\lesssim m/p$, so using $t\le p$ and $sj\lesssim m$,
\begin{equation}\label{eq:many-dir}
ts^2j^4\le p(m/j)^2j^4=pm^2j^2\le \frac{m^4}{p}.
\end{equation}
If the term $(pmL)^{1/2}=(pmts)^{1/2}$ is $\gtrsim jts$, then $ts\lesssim pm/j^2$, hence $s\lesssim m^2/(pj^2)$ by $t\gtrsim p^2/m$, and again $ts^2j^4\le p\big(m^2/(pj^2)\big)^2j^4=m^4/p$.

Summing the bound \eqref{eq:low-rich} together with the bounds \eqref{eq:few-dir} and \eqref{eq:many-dir} over the $O(\log^2m)$ classes, and absorbing the logarithmic factor into $p^\eps$, gives \eqref{eq:dir-energy}.
\end{proof}

We now replace Proposition \ref{prop:general-B} in prime fields.

\begin{proposition}\label{prop:prime-B}
Let $A,B\subset\F_p^2$ be $k$-regular sets, or both irregular sets, with $|A|\sim|B|\sim m$. Then for every $\eps>0$,
\begin{equation}\label{eq:prime-B}
\Bcal(A,B)\lesssim_\eps p^\eps\Big(m^{8/3}p^{2/3}+m^3\min(m,p)^{1/3}+\frac{m^4}{p}+pm^2\Big).
\end{equation}
\end{proposition}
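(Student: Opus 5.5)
The plan is to retrace the proof of Proposition \ref{prop:general-B} and keep its structure. The only change is that the step which produced the summand $m^{7/2}$ will use Lemma \ref{lem:directional-energy} in place of the arbitrary-field argument. Recall that $\Bcal(A,B)=\min\{\Tcal(A,B),\,p(\Rcal(A)\Rcal(B))^{1/2}\}$. We use the identity $\Tcal(A,B)=\sum_\theta E_\theta(A)E_\theta(B)$. For each set we split the lines into two families: the frame lines, namely the $\sim k$ rich lines of the regular decomposition, and all other lines. By the definition of $k$-regularity, every non-frame line meets the set in at most $k\le m^{1/2}$ points. In the irregular case every line meets the set in at most $m^{1/2}$ points and there is no frame. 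Correspondingly,
\[
E_\theta(A)=E_{\theta,\mathrm{fr}}(A)+E_{\theta,\mathcal L}(A),
\]
where $\mathcal L$ is the family of lines meeting $A$ in at most $m^{1/2}$ points, and likewise for $B$. Expanding the product splits $\Tcal(A,B)$ into four terms.

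The term $\sum_\theta E_{\theta,\mathcal L}(A)E_{\theta,\mathcal L}(B)$ is handled by Cauchy--Schwarz in $\theta$ followed by Lemma \ref{lem:directional-energy} applied to both $A$ and $B$. This gives
\[
p^\eps\big(m^3\min(m,p)^{1/3}+m^4/p+pm^2\big).
\]
This replaces the $m^{7/2}$ term. I expect that in \cite{LewBilinear} the $m^{7/2}$ came exactly from bounding the directional energy of the non-frame part by $m^2\cdot m\cdot m^{1/2}$, i.e. each $E_\theta\le m\cdot m^{1/2}$ and $\sum_\theta E_\theta\le m^2$.

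The remaining three terms involve frame lines of $A$ or of $B$, and these are the terms that, together with the rectangle bound, led to $m^{8/3}p^{2/3}$. There are only $\sim k$ frame directions, and each frame line carries $\sim m/k$ points. So $\sum_\theta E_{\theta,\mathrm{fr}}(A)E_\theta(B)\lesssim (m/k)^2\cdot\sum_{\theta\ \mathrm{frame}} k_\theta\, E_\theta(B)$, which is $\lesssim m^3+\dots$, and its contribution should be $\lesssim m^3k+m^4k^{-2}$, matching \eqref{eq:reg-T}. On the other side, \eqref{eq:reg-R} gives $p(\Rcal(A)\Rcal(B))^{1/2}\lesssim pm^2k$. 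Taking the minimum of $m^4k^{-2}$ and $pm^2k$, and optimizing over $k$, yields $m^{8/3}p^{2/3}$ at $k\sim(m^2/p)^{1/3}$. The cross term $m^3k$ is at most $m^{7/2}$ for large $k$. To avoid this I would bound it by $\min(m^3k,\,pm^2k)$ only when $k$ is large. Better still, I would bound the frame--nonframe cross term by Cauchy--Schwarz: the frame directional energy is $\lesssim k(m^2/k)^2=m^4/k$, and the nonframe energy is controlled by the lemma. The resulting geometric mean is then dominated by the listed terms, after again taking the minimum with $pm^2k$ when $k$ is large.

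The main obstacle will be the bookkeeping in these cross terms: checking that every mixed contribution is dominated by one of $m^{8/3}p^{2/3}$, $m^3\min(m,p)^{1/3}$, $m^4/p$ or $pm^2$ once the minimum with $p(\Rcal(A)\Rcal(B))^{1/2}\lesssim pm^2k$ is taken. If the geometric-mean bound fails in some range of $k$, I would fall back on the direct frame estimate $(m/k)^2\cdot k\cdot m\cdot\max_\theta(\cdot)$ and compare it with $pm^2k$ case by case.
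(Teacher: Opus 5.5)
Your architecture is the paper's. You split $\Tcal(A,B)=\sum_\theta E_\theta(A)E_\theta(B)$ into frame and non-frame pairs. You bound the non-frame--non-frame part by Cauchy--Schwarz and Lemma~\ref{lem:directional-energy}. You play the frame--frame part $m^4k^{-2}$ against $pm^2k$ to get $m^{8/3}p^{2/3}$, and you treat irregular sets directly by the lemma.

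The gap is in the mixed frame--non-frame terms. You identify them as the main obstacle but do not close them, and the route you call ``better still'' fails. The estimate $\sum_\theta E_{\theta,\mathrm{fr}}(A)^2\lesssim k(m^2/k)^2=m^4/k$ loses a factor of $k$. With it, Cauchy--Schwarz and the lemma only give a mixed term of size $p^\eps m^2k^{-1/2}\big(m^{3/2}\mu^{1/6}+m^2p^{-1/2}+p^{1/2}m\big)$, where $\mu=\min(m,p)$. At $m\sim p$ this is about $p^{11/3}k^{-1/2}$. Even after taking the minimum with $pm^2k=p^3k$, the admissible choice $k\sim p^{4/9}\le m^{1/2}$ leaves $p^{31/9}$. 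Every term on the right of \eqref{eq:prime-B} is at most $p^{10/3}$ there. Note also that the rectangle bound $pm^2k$ helps when $k$ is small, not when $k$ is large.

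The worry that pushed you to this route is misplaced. In the frame/non-frame split, the summand $m^3k$ of \eqref{eq:reg-T} is the non-frame--non-frame contribution: each directional count of non-frame pairs is at most $mk$, summed against $\sum_\theta E_\theta\le m^2$. That is exactly what the lemma replaces, so the cross terms never produce $m^3k$ or $m^{7/2}$.

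Write $\mathcal L_B$ for the non-frame lines of $B$. Within one direction these are parallel and disjoint, and each meets $B$ in at most $k$ points. Pairing $\ell^1$ with $\ell^\infty$ then gives, for every $k$,
\[
\sum_\theta E_{\theta,\mathrm{fr}}(A)E_{\theta,\mathcal L_B}(B)\le\Big(\max_\theta E_{\theta,\mathcal L_B}(B)\Big)\sum_\theta E_{\theta,\mathrm{fr}}(A)\lesssim mk\cdot\frac{m^2}{k}=m^3\le m^3\min(m,p)^{1/3}.
\]
This is what your first display (``$\lesssim m^3+\dots$'') and your ``direct frame estimate'' amount to, and you should have kept it.

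The paper closes the mixed terms differently. It uses the sharp bound $\big(\sum_\theta E_{\theta,\mathrm{fr}}(A)^2\big)^{1/2}\le\sum_\theta E_{\theta,\mathrm{fr}}(A)\lesssim m^2/k$ together with Cauchy--Schwarz and the lemma. It restricts to $k\ge k_*=m^{2/3}p^{-1/3}$, since for $k\le k_*$ the rectangle side already gives $\Bcal\le pm^2k_*=m^{8/3}p^{2/3}$. It then checks that $m^{17/6}p^{1/3}\mu^{1/6}$, $m^{10/3}p^{-1/6}$ and $m^{7/3}p^{5/6}$ are each dominated by the right side of \eqref{eq:prime-B}. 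The $\ell^1$--$\ell^\infty$ bound is simpler and avoids that case analysis for the cross terms.
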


\begin{proof}
Set $k_*=m^{2/3}p^{-1/3}$ and $\mu=\min(m,p)$.

If $A$ and $B$ are $k$-regular with $k\le k_*$, then \eqref{eq:reg-R} gives $\Rcal(A),\Rcal(B)\lesssim m^2k$, so the second term of \eqref{eq:B-def} already gives the result,
\[
\Bcal(A,B)\le p\big(\Rcal(A)\Rcal(B)\big)^{1/2}\lesssim pm^2k\le pm^2k_*=m^{8/3}p^{2/3}.
\]

Suppose instead that $A$ and $B$ are $k$-regular with $k\ge k_*$, and use the first term of \eqref{eq:B-def}, $\Bcal(A,B)\le\Tcal(A,B)=\sum_\theta E_\theta(A)E_\theta(B)$. Call a pair of distinct points a frame pair if the two points lie on a common frame line and a non-frame pair otherwise, and let $F_\theta(A)$ and $E_{\theta,\mathcal L_A}(A)$ count, respectively, the frame and non-frame pairs of $A$ of direction $\theta$, where $\mathcal L_A$ is the family of non-frame lines of $A$. Then $E_\theta(A)=F_\theta(A)+E_{\theta,\mathcal L_A}(A)$, and likewise for $B$, so expanding the product splits the trapezoid count into three parts,
\begin{equation}\label{eq:three-parts}
\Tcal(A,B)=\sum_\theta F_\theta(A)F_\theta(B)+\sum_\theta E_{\theta,\mathcal L_A}(A)E_{\theta,\mathcal L_B}(B)+\sum_\theta\big(F_\theta(A)E_{\theta,\mathcal L_B}(B)+E_{\theta,\mathcal L_A}(A)F_\theta(B)\big),
\end{equation}
the frame, non-frame, and mixed contributions, which we bound in turn. The non-frame lines $\mathcal L_A$ and $\mathcal L_B$ meet their sets in at most $k\le m^{1/2}$ points. The frame of $A$ consists of $\sim k$ lines with $\sim m/k$ points each, so the total number of frame pairs is $\sum_\theta F_\theta(A)\lesssim k(m/k)^2=m^2/k$, and in particular
\begin{equation}\label{eq:frame-energy}
\Big(\sum_\theta F_\theta(A)^2\Big)^{1/2}\le\sum_\theta F_\theta(A)\lesssim\frac{m^2}{k},
\end{equation}
and likewise for $B$.

For the frame contribution, Cauchy-Schwarz in $\theta$ and \eqref{eq:frame-energy} give
\[
\sum_\theta F_\theta(A)F_\theta(B)\le\Big(\sum_\theta F_\theta(A)^2\Big)^{1/2}\Big(\sum_\theta F_\theta(B)^2\Big)^{1/2}\lesssim\frac{m^4}{k^2}\le\frac{m^4}{k_*^2}=m^{8/3}p^{2/3}.
\]
For the non-frame contribution, Lemma \ref{lem:directional-energy} applied to $(A,\mathcal L_A)$ and to $(B,\mathcal L_B)$ and Cauchy-Schwarz in $\theta$ give
\[
\sum_\theta E_{\theta,\mathcal L_A}(A)E_{\theta,\mathcal L_B}(B)\le\Big(\sum_\theta E_{\theta,\mathcal L_A}(A)^2\Big)^{1/2}\Big(\sum_\theta E_{\theta,\mathcal L_B}(B)^2\Big)^{1/2}\lesssim_\eps p^\eps\Big(m^3\mu^{1/3}+\frac{m^4}{p}+pm^2\Big).
\]
For the mixed contribution, combining \eqref{eq:frame-energy} with the bound of Lemma \ref{lem:directional-energy} for $(B,\mathcal L_B)$ in the form
\[
\Big(\sum_\theta E_{\theta,\mathcal L_B}(B)^2\Big)^{1/2}\lesssim_\eps p^\eps\Big(m^{3/2}\mu^{1/6}+m^2p^{-1/2}+p^{1/2}m\Big)
\]
through Cauchy-Schwarz gives
\begin{equation}\label{eq:mixed}
\sum_\theta F_\theta(A)E_{\theta,\mathcal L_B}(B)
\lesssim_\eps p^\eps\,\frac{m^2}{k}\Big(m^{3/2}\mu^{1/6}+m^2p^{-1/2}+p^{1/2}m\Big).
\end{equation}
Using $k\ge k_*=m^{2/3}p^{-1/3}$, the three summands in \eqref{eq:mixed} are at most
\begin{equation}\label{eq:mixed-bounds}
m^{17/6}p^{1/3}\mu^{1/6},\qquad m^{10/3}p^{-1/6},\qquad m^{7/3}p^{5/6},
\end{equation}
respectively, and each is dominated by the right side of \eqref{eq:prime-B}. For the first term, if $m\le p$ it is $m^3p^{1/3}\le m^{8/3}p^{2/3}$, while if $m\ge p$ it is at most $m^3p^{1/3}=m^3\mu^{1/3}$. For the second term, if $m\le p$ it is at most $m^{10/3}=m^3\mu^{1/3}$; if $p\le m\le p^{3/2}$ it is at most $m^3p^{1/3}=m^3\mu^{1/3}$; and if $m\ge p^{3/2}$ it is at most $m^4/p$. For the third term, $m^{7/3}p^{5/6}\le m^{8/3}p^{2/3}+pm^2$ according as $m\ge p^{1/2}$ or $m\le p^{1/2}$. The other mixed term, with the roles of $A$ and $B$ exchanged, is identical. Combining the three contributions gives \eqref{eq:prime-B} for regular sets.

Finally, suppose that $A$ and $B$ are irregular. Let $\mathcal L$ be the family of all lines in $\F_p^2$. Since every line meets $A$ and $B$ in at most $m^{1/2}$ points, Lemma \ref{lem:directional-energy} applies to both $(A,\mathcal L)$ and $(B,\mathcal L)$. Also, because $\mathcal L$ is the family of all lines, every nonzero pair of points of $A$ or $B$ is counted in exactly one line of $\mathcal L$. Thus
$$
\Tcal(A,B)
=
\sum_\theta E_\theta(A)E_\theta(B)
=
\sum_\theta E_{\theta,\mathcal L}(A)E_{\theta,\mathcal L}(B).
$$
By Cauchy-Schwarz in $\theta$ and Lemma \ref{lem:directional-energy},
$$
\begin{aligned}
\Tcal(A,B)
&\le
\Big(\sum_\theta E_{\theta,\mathcal L}(A)^2\Big)^{1/2}
\Big(\sum_\theta E_{\theta,\mathcal L}(B)^2\Big)^{1/2} \\
&\lesssim_\eps
p^\eps\Big(m^3\mu^{1/3}+\frac{m^4}{p}+pm^2\Big),
\end{aligned}
$$
where we used $|A|\sim|B|\sim m$ in the last line. Since $\Bcal(A,B)\le\Tcal(A,B)$, this proves the irregular case.
\end{proof}

The preceding estimate controls the trapezoid side of \eqref{eq:B-def}.  At the scale that will be critical below, one can also improve the rectangle side by retaining the dependence on $p$ in the rich-line estimates, rather than passing directly to the estimate \eqref{eq:rect-prime}.

\begin{lemma}[Hybrid rectangle estimate]\label{lem:hybrid-rectangle}
Suppose that $-1$ is not a square in $\F_p$, and let $A\subset\F_p^2$ have cardinality $n$ with
\[
p\le n\le p^{10/7}.
\]
Then, for every $\eps>0$,
\begin{equation}\label{eq:hybrid-rectangle}
\Rcal(A)\lesssim_\eps p^\eps n^2p^{3/7}.
\end{equation}
\end{lemma}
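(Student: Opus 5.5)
The plan is to count rectangles from a corner vertex and split the two perpendicular sides according to how rich their supporting lines are. The threshold will be $\tau=p^{3/7}$. Since $-1$ is not a square, a direction $\theta$ and its perpendicular $\theta^\perp$ are distinct. A rectangle is then determined by $x_1$ together with the points $x_0$ and $x_2$ on the lines through $x_1$ of directions $\theta$ and $\theta^\perp$, after which $x_3=x_0+x_2-x_1$ is forced. Writing $r_\theta(x)=|\ell_\theta(x)\cap A|-1$, where $\ell_\theta(x)$ is the line through $x$ in direction $\theta$, this gives
\[
\Rcal(A)\le\sum_{x\in A}\sum_\theta r_\theta(x)\,r_{\theta^\perp}(x).
\]
Here the membership $x_3\in A$ has simply been dropped. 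I will split each factor according to whether $|\ell_\theta(x)\cap A|\le\tau$ (poor) or $>\tau$ (rich), and pay the usual $p^\eps$ for dyadic pigeonholing in richness.

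\emph{Step 1: a poor line is involved.} If $\ell_\theta(x)$ is poor, then $r_\theta(x)\le\tau$. The lines $\ell_{\theta^\perp}(x)$, as $\theta$ varies, are distinct lines through $x$, so $\sum_\theta r_{\theta^\perp}(x)\le n$. Summing over $x$ bounds this part by $\tau n^2=n^2p^{3/7}$, and the case where $\ell_{\theta^\perp}(x)$ is poor is symmetric.

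\emph{Step 2: counting rich lines.} Here the hypothesis $n\le p^{10/7}$ enters. It is equivalent to $n/p\le p^{3/7}=\tau$, so for lines with $\sim J\ge\tau$ points the term $nL/p$ in Vinh's bound \eqref{eq:vinh-inc} cannot dominate $JL$. Hence the number of $J$-rich lines is $L_J\lesssim pn/J^2$, and they carry $I_J\lesssim pn/J$ incidences. The lower bound $n\ge p$ should only be needed so that $\tau\le n^{1/2}$ and the regimes are nonempty.

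\emph{Step 3: both lines rich (main obstacle).} The crude bound fixes a $J$-rich line $\ell$ and $x\in\ell$. The perpendicular lines through the points of $\ell$ are parallel, so they are disjoint. This gives a contribution $\lesssim J\cdot n$ per line, hence $\lesssim n\,I_J\lesssim pn^2/J\le n^2p^{4/7}$. That is off by exactly $p^{1/7}$, so Step 3 is where I expect the real difficulty.

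To recover the factor $p^{1/7}$, I would also pigeonhole the perpendicular richness $J'\ge J$ and restrict to the $t$ directions in which $J$-rich lines occur, with $\sim s$ of them per direction. I would then exploit that only $J'$-rich perpendicular lines count: the perpendicular lines met from $\ell$ are rich, and there are at most $\sim pn/J'^2$ of them in all. The incidences between those perpendicular rich lines and $A$ would be bounded by the direction-sensitive bound \eqref{eq:LPV} when $t\lesssim p^2/n$, and by \eqref{eq:vinh-inc} otherwise, mirroring the case analysis in the proof of Lemma \ref{lem:directional-energy}.

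The bookkeeping for Step 3 is the Cauchy--Schwarz estimate
\[
\sum_\theta E^{\rm rich}_\theta(A)^{1/2}E^{\rm rich}_{\theta^\perp}(A)^{1/2}\Big(\max_\theta(\text{rich-line richness})\Big),
\]
combined with the constraints $t\le p+1$, $sJ\lesssim n$ and $J\le n^{1/2}$. I expect the $t^{2/9}L^{5/9}n^{7/9}$ term of \eqref{eq:LPV} to be the binding one. Balancing it against Vinh's $(pnL)^{1/2}$ should produce exactly the exponent $3/7$ at the threshold $\tau=p^{3/7}$, consistent with the range endpoint $n=p^{10/7}$. If this balance fails, my fallback is to reinstate the constraint $x_3\in A$ in the rich--rich case, and bound that term through the trapezoid count via Lemma \ref{lem:directional-energy}.
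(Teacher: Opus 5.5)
\textbf{Steps 1 and 2 are sound.} They match the paper: the low-richness bound $n^2K$ with $K\sim p^{3/7}$, and the use of $n\le p^{10/7}$ to get $K\gtrsim n/p$, so that Vinh gives $|\mathcal L_k|\lesssim pnk^{-2}$. You also drop degenerate quadruples such as $(x,x,y,y)$, but these are only $O(n^2)$.

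\textbf{The gap is Step 3, which is the entire content of the lemma.} You record an expected balance and a fallback, not an argument, and the bookkeeping you write down does not work. There are two concrete problems.

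(i) Charging $J\cdot n$ per rich line is wasteful. The paper assigns each rectangle to an edge-line of maximal richness. Then the perpendicular line through each point of a $k$-rich line carries $\lesssim k$ points, and the class contributes $\lesssim k^3|\mathcal L_k|$ for $k\le n^{1/2}$ and $\lesssim nk|\mathcal L_k|$ for $k\ge n^{1/2}$. Lines with more than $n^{1/2}$ points genuinely occur here, so $J\le n^{1/2}$ is not an available constraint. With Vinh this is $\lesssim pn^{3/2}\le n^2p^{3/7}$, which already settles $n\ge p^{8/7}$.

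(ii) For $p\le n<p^{8/7}$, Vinh alone cannot suffice, and LPV does not rescue your Cauchy--Schwarz expression. When $n>p$ the rich lines may span $t\in(p^2/n,p]$ directions, where LPV is unavailable. In that regime Vinh permits $\sim p$ lines in distinct directions, each with $\sim n^{1/2}$ points, and for these $\sum_\theta k_{\max}E_\theta^{1/2}E_{\theta^\perp}^{1/2}\sim pn^{3/2}>n^2p^{3/7}$. The paper handles this range with the Stevens--de Zeeuw rich-line bound $|\mathcal L_k|\lesssim n^{11/4}k^{-15/4}+nk^{-1}+n^{13/2}p^{-15/2}$, which does not depend on how many directions occur. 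It then interpolates: $\min\{n^{11/4}k^{-3/4},pnk\}\le(n^{11/4}k^{-3/4})^{4/7}(pnk)^{3/7}=n^2p^{3/7}$.

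\textbf{Your LPV instinct is not wrong, but turning it into a proof needs the grid structure that Cauchy--Schwarz throws away.} Pigeonhole the two richnesses $k_1,k_2$ and the numbers $s_1,s_2$ of rich lines in directions $\theta^\perp$ and $\theta$. Then the corner vertices in direction $\theta$ number at most $\min\{s_2k_2,\,s_1k_1,\,s_1s_2\}$.
\begin{itemize}
\item In the many-direction regime, Vinh forces $s_1,s_2$ to be small, and the $s_1s_2$ bound then suffices.
\item At $n=p$, balancing the first LPV term against Vinh is extremal at $t=p^{5/7}$, $s=k=p^{3/7}$. This gives exactly $p^{17/7}=n^2p^{3/7}$, matching your guessed threshold.
\end{itemize}
This route appears to close without Stevens--de Zeeuw. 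It does so only after a case analysis over every term of the Lund--Pham--Vinh bound and over both richness orderings, and none of that is in the proposal. As written, the key estimate is asserted rather than proved.
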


\begin{proof}
We first dispose of degenerate rectangles.  If one of the four edge differences is zero, say $x_0=x_1$, then the two remaining corner conditions involving the opposite edge give $(x_3-x_2)\cdot(x_3-x_2)=0$.  Since $-1$ is not a square, the form $u\cdot u$ has no nonzero zeros, and hence $x_2=x_3$.  Thus degenerate rectangles are determined by two points of $A$, up to cyclic relabeling, and contribute $O(n^2)$, which is acceptable since $n\ge p$.  We may therefore count only rectangles whose edge-lines are genuine lines.

For a dyadic integer $k$, let $\mathcal L_k$ denote the lines containing between $k$ and $2k$ points of $A$.  The Stevens--de Zeeuw incidence theorem gives
\begin{equation}\label{eq:rich-SdZ}
|\mathcal L_k|\lesssim n^{11/4}k^{-15/4}+nk^{-1}+n^{13/2}p^{-15/2}.
\end{equation}
On the other hand, Vinh's estimate gives, whenever $k\gtrsim n/p$,
\begin{equation}\label{eq:rich-vinh}
|\mathcal L_k|\lesssim pnk^{-2}.
\end{equation}

We use the standard dyadic rectangle count from the proof of Lemma \ref{lem:rect-prime}.  Assign to each rectangle one of its edge-lines having maximal intersection with $A$.  If $K\ge1$, the rectangles assigned to lines containing at most $K$ points contribute
\begin{equation}\label{eq:rect-low-rich}
\lesssim n^2K.
\end{equation}
For a dyadic class $k\ge K$, the contribution is at most
\begin{equation}\label{eq:rect-rich-classes}
\begin{cases}
k^3|\mathcal L_k|,&K\le k\le n^{1/2},\\
nk|\mathcal L_k|,&k\ge n^{1/2}.
\end{cases}
\end{equation}
Indeed, the first line follows by choosing an assigned edge, one endpoint, and the adjacent perpendicular edge.  For the second line, the perpendicular lines through the points of a fixed edge are parallel and disjoint, so their intersections with $A$ have total size at most $n$.

Take $K\sim p^{3/7}$, with the implicit constant large enough that $K\gtrsim n/p$ which we can do because $n\le p^{10/7}$.  First suppose that $n\ge p^{8/7}$.  Applying \eqref{eq:rich-vinh} in \eqref{eq:rect-rich-classes}, both richness ranges contribute at most
\[
pn^{3/2}\le n^2p^{3/7}
\]
per dyadic class.  Together with \eqref{eq:rect-low-rich}, this proves the claim in this case.

It remains to consider $n\le p^{8/7}$.  For $K\le k\le n^{1/2}$, combine \eqref{eq:rich-SdZ} and \eqref{eq:rich-vinh}.  The leading terms satisfy
\[
\min\{n^{11/4}k^{-3/4},\,pnk\}
\le
\big(n^{11/4}k^{-3/4}\big)^{4/7}(pnk)^{3/7}
=n^2p^{3/7}.
\]
The other two terms from \eqref{eq:rich-SdZ}, after multiplication by $k^3$, are bounded by
\[
nk^2+n^{13/2}p^{-15/2}k^3
\le n^2+n^8p^{-15/2}
\lesssim n^2p^{3/7},
\]
where the last inequality uses $n\le p^{8/7}$.  For $k\ge n^{1/2}$, \eqref{eq:rich-SdZ} and the second line of \eqref{eq:rect-rich-classes} give
\[
n^{15/4}k^{-11/4}+n^2+n^{15/2}p^{-15/2}k
\le n^{19/8}+n^2+n^{17/2}p^{-15/2}
\lesssim n^2p^{3/7},
\]
again because $n\le p^{8/7}$.  Summing the $O(\log p)$ dyadic classes and absorbing the logarithm into $p^\eps$ proves \eqref{eq:hybrid-rectangle}.
\end{proof}

\begin{corollary}\label{cor:hybrid-B}
Let $A,B\subset\F_p^2$ be $k$-regular sets, or both irregular sets, with $|A|\sim|B|\sim m$ and
\[
p\lesssim m\lesssim p^{10/7}.
\]
Then, for every $\eps>0$,
\begin{equation}\label{eq:hybrid-B}
\Bcal(A,B)\lesssim_\eps p^\eps\left(
m^{8/3}p^{2/3}+\frac{m^4}{p}+pm^2+
\min\{m^3p^{1/3},\,m^2p^{10/7}\}\right).
\end{equation}
\end{corollary}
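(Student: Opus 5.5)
The plan is to combine Proposition \ref{prop:prime-B} with Lemma \ref{lem:hybrid-rectangle}, using the minimum in the definition \eqref{eq:B-def} of $\Bcal$. Since $p\lesssim m$, we have $\mu=\min(m,p)=p$, so Proposition \ref{prop:prime-B} already gives
\[
\Bcal(A,B)\lesssim_\eps p^\eps\Big(m^{8/3}p^{2/3}+m^3p^{1/3}+\frac{m^4}{p}+pm^2\Big).
\]
This supplies the first entry of the minimum in \eqref{eq:hybrid-B}. What remains is to show that the same quantity is also bounded by $p^\eps(m^{8/3}p^{2/3}+m^4/p+pm^2+m^2p^{10/7})$.

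For the second entry, use the rectangle side of \eqref{eq:B-def}. Because $p\lesssim m\lesssim p^{10/7}$, Lemma \ref{lem:hybrid-rectangle} applies to $A$ and to $B$ with $n\sim m$. The implicit constants at the endpoints can be absorbed by splitting into boundedly many pieces, or by noting that the proof of the lemma tolerates constants. The lemma gives $\Rcal(A),\Rcal(B)\lesssim_\eps p^\eps m^2p^{3/7}$, and hence
\[
\Bcal(A,B)\le p\big(\Rcal(A)\Rcal(B)\big)^{1/2}\lesssim_\eps p^{\eps}\,p\cdot m^2p^{3/7}=p^\eps m^2p^{10/7}.
\]
This needs no regularity hypothesis at all. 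Combining the two estimates, $\Bcal(A,B)$ is at most the minimum of the two upper bounds. Each upper bound has the form $X+Y_i$, where $X=m^{8/3}p^{2/3}+m^4/p+pm^2$ is common, and $\min(X+Y_1,X+Y_2)\le X+\min(Y_1,Y_2)$. This is exactly \eqref{eq:hybrid-B}.

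The only point requiring care is the hypothesis bookkeeping. Lemma \ref{lem:hybrid-rectangle} requires $-1$ to be a nonsquare, which is the standing assumption of this section, and it requires $p\le n\le p^{10/7}$ exactly. With $|A|\sim m$, the set size may exceed $p^{10/7}$ or fall below $p$ by a constant factor. I would handle this by rereading the proof of the lemma: the choice $K\sim p^{3/7}$ with $K\gtrsim n/p$ and the inequalities $n\le p^{8/7}$ all survive constant-factor slack. Alternatively, for $n$ just above $p^{10/7}$, one can note that the term $m^4/p$ dominates $m^2p^{10/7}$ up to constants. I expect no genuine obstacle beyond this.
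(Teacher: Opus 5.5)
Your proposal is correct and is essentially the paper's proof: Proposition \ref{prop:prime-B} (with $\min(m,p)^{1/3}\le p^{1/3}$) gives the bound with $m^3p^{1/3}$; Lemma \ref{lem:hybrid-rectangle} with the rectangle side of \eqref{eq:B-def} gives $p^\eps m^2p^{10/7}$ with no regularity hypothesis; and $\min\{U+V,D\}\le U+\min\{V,D\}$ finishes the argument. Your constant-slack bookkeeping is fine, but in your alternative remark the fact that actually matters for $m\gtrsim p^{4/3}$ is $m^4/p\ge m^3p^{1/3}$, not $m^4/p\ge m^2p^{10/7}$; that inequality is what lets Proposition \ref{prop:prime-B} alone suffice near $p^{10/7}$.
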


\begin{proof}
Since $m\gtrsim p$, Proposition \ref{prop:prime-B} gives the right side of \eqref{eq:hybrid-B} with $m^3p^{1/3}$ in place of the minimum.  Lemma \ref{lem:hybrid-rectangle} and the second alternative in \eqref{eq:B-def} give independently
\[
\Bcal(A,B)\le p\big(\Rcal(A)\Rcal(B)\big)^{1/2}
\lesssim_\eps p^\eps m^2p^{10/7}.
\]
For nonnegative $U,V,D$ one has $\min\{U+V,D\}\le U+\min\{V,D\}$.  Applying this with
\[
U=m^{8/3}p^{2/3}+m^4p^{-1}+pm^2,
\qquad V=m^3p^{1/3},
\qquad D=m^2p^{10/7},
\]
proves the result.
\end{proof}

We insert Proposition \ref{prop:prime-B} and Corollary \ref{cor:hybrid-B} into the bilinear inequality. Write
$N=|G|$. As in \cite{LewBilinear}, after the standard dyadic reductions in the horizontal slices, and at the cost of a factor $p^\eps$, it suffices to consider the following situation. The function $g\sim 1_G$, the set $G$ has $w$ nonempty horizontal slices, each slice has size
$$
m\sim Nw^{-1},
$$
and the planar supports $G_z$ of all nonempty slices lie in one common dyadic regularity class. More precisely, either every $G_z$ is $k$-regular for the same dyadic $k$, or every $G_z$ is irregular, in which case we write $k\sim m^{1/2}$.

Indeed, after first pigeonholing the slice sizes, Lemma \ref{lem:regular} decomposes each slice into $O(\log p)$ regular or irregular components. Write $g=\sum_\nu g_\nu$ for the resulting pieces, grouped by slice-size class and regularity class. There are only $O(\log^2p)$ such pieces, so by the triangle inequality it is enough to prove the restricted estimate for one $g_\nu$ and then sum, absorbing the logarithmic loss into $p^\eps$. For that single piece, which we relabel as $g$, all nonempty slices have comparable size and lie in one common regularity class. Thus no cross-class bilinear terms remain, and Proposition \ref{prop:prime-B} may be applied to every pair of nonempty slices.

We start from \eqref{eq:biMT-recall}, with $q=p$, namely
\begin{equation}\label{eq:prime-start}
\|\widehat g\|_{L^2(P,d\sigma)}
\lesssim
N^{1/2}+N^{3/8}p^{-1/8}
\Big(\sum_z\Rcal(G_z)^{1/2}
+p^{-1/2}\sum_{z\ne z'}\Bcal(G_z,G_{z'})^{1/2}
+pN\Big)^{1/4}.
\end{equation}
We estimate the three terms inside the fourth root separately. For the linear slice term, \eqref{eq:rect-CS} gives $\Rcal(G_z)\lesssim m^{5/2}$ for each nonempty slice, and hence
\begin{equation}\label{eq:prime-linear-slices}
\sum_z\Rcal(G_z)^{1/2}
\lesssim wm^{5/4}=w(Nw^{-1})^{5/4}=N^{5/4}w^{-1/4}\le N^{5/4}.
\end{equation}
The contribution of this term to \eqref{eq:prime-start} is therefore at most
$$
N^{3/8}p^{-1/8}(N^{5/4})^{1/4}=p^{-1/8}N^{11/16}.
$$
The last term inside the fourth root in \eqref{eq:prime-start} contributes
$$
N^{3/8}p^{-1/8}(pN)^{1/4}=p^{1/8}N^{5/8}.
$$

It remains to estimate the bilinear slice term.  The same calculation will be
used for each term in the bound for $\Bcal(G_z,G_{z'})$.  If one such term is
$X(m,p)$, then after taking the square root and summing over the at most $w^2$
ordered pairs of nonempty slices, it gives
\[
p^{-1/2}w^2X(m,p)^{1/2}
\]
inside the fourth root in \eqref{eq:prime-start}.  Thus its contribution to
\eqref{eq:prime-start} is
\begin{equation}\label{eq:bilinear-contribution-rule}
\begin{aligned}
N^{3/8}p^{-1/8}\Big(p^{-1/2}w^2X(m,p)^{1/2}\Big)^{1/4}
&=p^{-1/4}N^{3/8}w^{1/2}X(m,p)^{1/8}.
\end{aligned}
\end{equation}
We apply this calculation term by term to the bound in Proposition \ref{prop:prime-B}.

First take $X=m^{8/3}p^{2/3}$.  Since $m=Nw^{-1}$ and $w\le p$, \eqref{eq:bilinear-contribution-rule} gives
\begin{equation}\label{eq:bilinear-first-summand}
\begin{aligned}
p^{-1/4}N^{3/8}w^{1/2}(m^{8/3}p^{2/3})^{1/8}
&=p^{-1/6}N^{3/8}w^{1/2}m^{1/3} \\
&=p^{-1/6}N^{17/24}w^{1/6} \\
&\le N^{17/24}.
\end{aligned}
\end{equation}
Next take $X=m^4p^{-1}$.  Then
\begin{equation}\label{eq:bilinear-second-summand}
\begin{aligned}
p^{-1/4}N^{3/8}w^{1/2}(m^4p^{-1})^{1/8}
&=p^{-3/8}N^{3/8}w^{1/2}m^{1/2} \\
&=p^{-3/8}N^{7/8}.
\end{aligned}
\end{equation}
Finally take $X=pm^2$.  This gives
\begin{equation}\label{eq:bilinear-third-summand}
\begin{aligned}
p^{-1/4}N^{3/8}w^{1/2}(pm^2)^{1/8}
&=p^{-1/8}N^{3/8}w^{1/2}m^{1/4} \\
&=p^{-1/8}N^{5/8}w^{1/4} \\
&\le p^{1/8}N^{5/8}.
\end{aligned}
\end{equation}

It remains to handle the summand $m^3\min(m,p)^{1/3}$ in Proposition \ref{prop:prime-B}, together with the improvement supplied by Corollary \ref{cor:hybrid-B}.  If $N\le p^2$, then $m^3\min(m,p)^{1/3}\le m^{10/3}$.  Using \eqref{eq:bilinear-contribution-rule}, its contribution is at most
\begin{equation}\label{eq:bad-term-small-N}
\begin{aligned}
p^{-1/4}N^{3/8}w^{1/2}(m^{10/3})^{1/8}
&=p^{-1/4}N^{3/8}w^{1/2}m^{5/12} \\
&=p^{-1/4}N^{19/24}w^{1/12} \\
&\le p^{-1/6}N^{19/24} \\
&\le N^{17/24},
\end{aligned}
\end{equation}
where the last two inequalities use $w\le p$ and $N\le p^2$.  We may therefore assume from now on that $N\ge p^2$.  Since $w\le p$, this implies $m=Nw^{-1}\ge p$ on every nonempty slice.

For slice sizes in the range of Corollary \ref{cor:hybrid-B}, the remaining part of the bilinear input is
\begin{equation}\label{eq:hybrid-input}
\min\{m^3p^{1/3},m^2p^{10/7}\}.
\end{equation}
The contribution obtained from the first term in \eqref{eq:hybrid-input} is
\begin{equation}\label{eq:hybrid-first-contribution}
\begin{aligned}
p^{-1/4}N^{3/8}w^{1/2}(m^3p^{1/3})^{1/8}
&=p^{-5/24}N^{3/8}w^{1/2}m^{3/8} \\
&=p^{-5/24}N^{3/4}w^{1/8}.
\end{aligned}
\end{equation}
The contribution obtained from the second term in \eqref{eq:hybrid-input} is
\begin{equation}\label{eq:hybrid-second-contribution}
\begin{aligned}
p^{-1/4}N^{3/8}w^{1/2}(m^2p^{10/7})^{1/8}
&=p^{-1/14}N^{3/8}w^{1/2}m^{1/4} \\
&=p^{-1/14}N^{5/8}w^{1/4}.
\end{aligned}
\end{equation}
The two inputs in \eqref{eq:hybrid-input} are equal when
\begin{equation}\label{eq:critical-slice-size}
m=p^{23/21}.
\end{equation}
Equivalently, since $m=Nw^{-1}$, the critical number of slices is
\begin{equation}\label{eq:critical-slice-number}
w_0=Np^{-23/21}.
\end{equation}
For $w\ge w_0$ the first term in \eqref{eq:hybrid-input} is smaller, while for $w\le w_0$ the second term is smaller.

We first dispose of the range $m\gtrsim p^{10/7}$, where Corollary \ref{cor:hybrid-B} is not being used.  This range corresponds to $w\lesssim Np^{-10/7}$.  Using the contribution \eqref{eq:hybrid-first-contribution}, we get
\begin{equation}\label{eq:large-slice-revert}
\begin{aligned}
p^{-5/24}N^{3/4}w^{1/8}
&\lesssim p^{-5/24}N^{3/4}(Np^{-10/7})^{1/8} \\
&=p^{-65/168}N^{7/8} \\
&\le p^{-3/8}N^{7/8}.
\end{aligned}
\end{equation}
This is already covered by \eqref{eq:bilinear-second-summand}.

It remains to consider the range $m\lesssim p^{10/7}$.  Suppose first that
\begin{equation}\label{eq:medium-N-range}
p^2\le N\le p^{44/21}.
\end{equation}
Then $w_0\le p$.  If $w\le w_0$, we use the second contribution \eqref{eq:hybrid-second-contribution}; by \eqref{eq:critical-slice-number},
\begin{equation}\label{eq:medium-range-second-input}
\begin{aligned}
p^{-1/14}N^{5/8}w^{1/4}
&\le p^{-1/14}N^{5/8}(Np^{-23/21})^{1/4} \\
&=p^{-29/84}N^{7/8} \\
&=p^{-1/12}N^{3/4}\big(p^{-11/42}N^{1/8}\big) \\
&\le p^{-1/12}N^{3/4},
\end{aligned}
\end{equation}
where the last inequality uses $N\le p^{44/21}$.  If instead $w\ge w_0$, we use the first contribution \eqref{eq:hybrid-first-contribution}; since $w\le p$,
\begin{equation}\label{eq:medium-range-first-input}
p^{-5/24}N^{3/4}w^{1/8}
\le p^{-5/24}N^{3/4}p^{1/8}
=p^{-1/12}N^{3/4}.
\end{equation}
Thus the hybrid part contributes at most
\begin{equation}\label{eq:hybrid-medium-bound}
p^{-1/12}N^{3/4}
\qquad (p^2\le N\le p^{44/21}).
\end{equation}

Now suppose that $N\ge p^{44/21}$.  Then $w_0\ge p$, so for all $w\le p$ in the range $m\lesssim p^{10/7}$, the second term in \eqref{eq:hybrid-input} is the smaller one.  Using \eqref{eq:hybrid-second-contribution} and $w\le p$ gives
\begin{equation}\label{eq:hybrid-large-bound}
p^{-1/14}N^{5/8}w^{1/4}
\le p^{-1/14}N^{5/8}p^{1/4}
=p^{5/28}N^{5/8}.
\end{equation}

Combining \eqref{eq:prime-linear-slices}, \eqref{eq:bilinear-first-summand}, \eqref{eq:bilinear-second-summand}, \eqref{eq:bilinear-third-summand}, \eqref{eq:bad-term-small-N}, \eqref{eq:large-slice-revert}, \eqref{eq:hybrid-medium-bound}, \eqref{eq:hybrid-large-bound}, and the diagonal term $pN$ in \eqref{eq:prime-start}, we obtain the following three estimates.  If $N\le p^2$, then
\begin{equation}\label{eq:prime-master-small-N}
\|\widehat g\|_{L^2(P,d\sigma)}
\lesssim_\eps p^\eps\Big(
N^{1/2}+p^{-1/8}N^{11/16}+N^{17/24}
+p^{1/8}N^{5/8}+p^{-3/8}N^{7/8}\Big).
\end{equation}
If $p^2\le N\le p^{44/21}$, then
\begin{equation}\label{eq:prime-master-medium-N}
\|\widehat g\|_{L^2(P,d\sigma)}
\lesssim_\eps p^\eps\Big(
N^{1/2}+p^{-1/8}N^{11/16}+N^{17/24}
+p^{1/8}N^{5/8}+p^{-3/8}N^{7/8}
+p^{-1/12}N^{3/4}\Big).
\end{equation}
If $N\ge p^{44/21}$, then
\begin{equation}\label{eq:prime-master-large-N}
\|\widehat g\|_{L^2(P,d\sigma)}
\lesssim_\eps p^\eps\Big(
N^{1/2}+p^{-1/8}N^{11/16}+N^{17/24}
+p^{1/8}N^{5/8}+p^{-3/8}N^{7/8}
+p^{5/28}N^{5/8}\Big).
\end{equation}

We combine \eqref{eq:prime-master-small-N}--\eqref{eq:prime-master-large-N} with the small-set estimate
\[
\|\widehat g\|_{L^2(P,d\sigma)}\lesssim N^{1/2}+Np^{-1/2}
\]
and with the prime-field estimate \eqref{eq:LewRect-prime-table} from \cite{LewRect}.  The part of \eqref{eq:LewRect-prime-table} that will be used below is the row
\[
p^{3/16}N^{5/8},\qquad p^{75/34}\le N\le p^{5/2}.
\]
The term $p^{-1/8}N^{11/16}$ is always bounded by $N^{17/24}$, and
\begin{equation}\label{eq:p18-term-dominated}
p^{1/8}N^{5/8}\le N^{17/24}
\qquad\text{whenever}\qquad
N\ge p^{3/2}.
\end{equation}
Also
\begin{equation}\label{eq:pminus38-transition}
p^{-3/8}N^{7/8}\le N^{17/24}
\quad\Longleftrightarrow\quad
N\le p^{9/4},
\end{equation}
and
\begin{equation}\label{eq:p528-transition}
p^{5/28}N^{5/8}\le N^{17/24}
\quad\Longleftrightarrow\quad
N\ge p^{15/7}.
\end{equation}
The middle hybrid term meets $N^{17/24}$ at $N=p^2$:
\begin{equation}\label{eq:middle-meets-pure}
p^{-1/12}N^{3/4}=N^{17/24}
\quad\Longleftrightarrow\quad
N=p^2.
\end{equation}
The two hybrid terms meet at
\begin{equation}\label{eq:new-transition}
p^{-1/12}N^{3/4}=p^{5/28}N^{5/8}
\quad\Longleftrightarrow\quad
N=p^{44/21}.
\end{equation}
Finally, since $75/34<9/4$, the row $p^{3/16}N^{5/8}$ in \eqref{eq:LewRect-prime-table} is available for every $N$ with $p^{9/4}\le N\le p^{5/2}$, and it agrees with $N^{17/24}$ when $N=p^{9/4}$.  We therefore arrive at
\begin{equation}\label{eq:new-prime-table}
\|\widehat g\|_{L^2(P,d\sigma)}
\lesssim_\eps p^\eps\Bigg(N^{1/2}+
\left\{\begin{array}{ll}
Np^{-1/2},&N\le p^{12/7},\\[2mm]
N^{17/24},&p^{12/7}\le N\le p^2,\\[2mm]
p^{-1/12}N^{3/4},&p^2\le N\le p^{44/21},\\[2mm]
p^{5/28}N^{5/8},&p^{44/21}\le N\le p^{15/7},\\[2mm]
N^{17/24},&p^{15/7}\le N\le p^{9/4},\\[2mm]
p^{3/16}N^{5/8},&p^{9/4}\le N\le p^{5/2},\\[2mm]
p^{1/2}N^{1/2},&p^{5/2}\le N\le p^3.
\end{array}\right.\Bigg).
\end{equation}

The estimate \eqref{eq:new-prime-table} was derived for a single dyadically reduced piece, whose nonempty slices have comparable size and lie in one common regularity class. Since the decomposition above has only $O(\log^2 p)$ pieces and we sum the pieces by the triangle inequality, the same restricted-type estimate, with the displayed $p^\eps$ loss, holds for the original function $g\sim1_G$.

Set
\[
\alpha=\frac{125}{176}.
\]
Every entry of \eqref{eq:new-prime-table} is bounded by $N^\alpha$ in its stated range.  For the first entry,
\begin{equation}\label{eq:first-alpha-check}
Np^{-1/2}\le N^\alpha
\quad\Longleftrightarrow\quad
N\le p^{88/51},
\end{equation}
and $12/7<88/51$.  The pure power $N^{17/24}$ is admissible because $17/24<125/176$.  For the two new entries,
\begin{equation}\label{eq:middle-alpha-check}
p^{-1/12}N^{3/4}\le N^\alpha
\quad\Longleftrightarrow\quad
N\le p^{44/21},
\end{equation}
while
\begin{equation}\label{eq:large-alpha-check}
p^{5/28}N^{5/8}\le N^\alpha
\quad\Longleftrightarrow\quad
N\ge p^{44/21}.
\end{equation}
For the term imported from \eqref{eq:LewRect-prime-table},
\begin{equation}\label{eq:LewRect-term-alpha-check}
p^{3/16}N^{5/8}\le N^\alpha
\quad\Longleftrightarrow\quad
N\ge p^{11/5},
\end{equation}
and $9/4>11/5$.  Lastly, $p^{1/2}N^{1/2}\le N^{7/10}<N^\alpha$ for $N\ge p^{5/2}$.  Hence
\[
\|\widehat g\|_{L^2(P,d\sigma)}\lesssim_\eps p^\eps N^{125/176}.
\]
By Lemma \ref{lem:eps-removal}, \eqref{eq:dual} holds for every $s<176/125$.  Dualizing gives
\[
R^*(2\to r)\lesssim_r1
\qquad\text{for every}\qquad
r>\frac{176}{51},
\]
which proves Theorem \ref{thm:prime-main}.

\section{The endpoint estimate in six dimensions}\label{sec:six}

Let $P=P_6=\{(\xi,\xi\cdot \xi):\xi\in F^5\}\subset F^6$. We prove $R^*\big(2\to\frac83\big)\lesssim1$. The only nontrivial input is the additive-energy estimate of \cite{IKL}, namely
\begin{equation}\label{eq:six-energy}
\Lambda(E)\lesssim q^{-1}|E|^3+q^2|E|^2,\qquad E\subset P_6,
\end{equation}
which is sharp in general because $P_6$ contains affine two-planes. We write $\Qcal(f,h)=\langle\widehat f,\widehat h\rangle_{L^2(P,d\sigma)}=\langle f,h*(d\sigma)^\vee\rangle_{F^6}$ for the associated bilinear form. The argument below is bilinear, of the type used in \cite{LL} to reach endpoint estimates without a logarithmic loss.

\begin{proposition}\label{prop:six-bilinear}
Let $A,B\subset F^6$ and let $|f|\le1_A$, $|h|\le1_B$. Then
\begin{equation}\label{eq:six-bilinear-one-sided}
|\Qcal(f,h)|\lesssim|A\cap B|+q^{-1}|A|^{3/4}|B|^{3/4}+|A|^{3/4}|B|^{1/2},
\end{equation}
and consequently
\begin{equation}\label{eq:six-bilinear-symmetric}
|\Qcal(f,h)|\lesssim|A\cap B|+q^{-1}|A|^{3/4}|B|^{3/4}+\min\{|A|^{3/4}|B|^{1/2},|A|^{1/2}|B|^{3/4}\}.
\end{equation}
We also have the (cruder) estimate
\begin{equation}\label{eq:six-parseval}
|\Qcal(f,h)|\lesssim q|A|^{1/2}|B|^{1/2}.
\end{equation}
\end{proposition}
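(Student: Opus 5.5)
We split the kernel as $(d\sigma)^\vee=\delta_0+K$, so that
\[
\Qcal(f,h)=\langle f,h\rangle+\langle f,h*K\rangle .
\]
The first term is bounded by $|A\cap B|$, since $|f|\le1_A$ and $|h|\le1_B$. For the second term we use H\"older, $|\langle f,h*K\rangle|\le\|f\|_{4/3}\|h*K\|_4=|A|^{3/4}\|h*K\|_4$. This reduces \eqref{eq:six-bilinear-one-sided} to the claim
\[
\|h*K\|_{L^4(F^6)}\lesssim q^{-1}|B|^{3/4}+|B|^{1/2}.
\]

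To bound $\|h*K\|_4$ we use Lemma \ref{lem:slice} with $d=6$:
\[
\|h*K\|_4\lesssim q^{5/2}\sum_z\|(\widetilde h_zd\sigma)^\vee\|_4 .
\]
For each slice we expand the fourth power via Plancherel. Writing $E_z\subset P_6$ for the lift of the support $B_z$ of $h_z$, we have $\|(\widetilde h_zd\sigma)^\vee\|_4^4\le q^{-20}\,q^{6}\,\Lambda(E_z)$. The normalization is $q^{-5}$ per factor in the extension operator, and counting measure on $F^6$ produces a factor $q^6$ times the number of additive quadruples. Inserting \eqref{eq:six-energy} gives
\[
\|(\widetilde h_zd\sigma)^\vee\|_4\lesssim q^{-7/2}\big(q^{-1/4}|B_z|^{3/4}+q^{1/2}|B_z|^{1/2}\big).
\]
Multiplying by $q^{5/2}$, each slice contributes $q^{-5/4}|B_z|^{3/4}+q^{-1/2}|B_z|^{1/2}$. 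Summing over the $q$ values of $z$ and using H\"older in $z$,
\[
\sum_z|B_z|^{3/4}\le q^{1/4}|B|^{3/4},\qquad \sum_z|B_z|^{1/2}\le q^{1/2}|B|^{1/2}.
\]
This yields exactly $q^{-1}|B|^{3/4}+|B|^{1/2}$. The main obstacle is the bookkeeping of the normalizations, checking that the powers of $q$ in Lemma \ref{lem:slice}, the measure $d\sigma$ and Plancherel combine as claimed. The exponents close up exactly, with no $q^\eps$ or logarithmic loss; this exactness is what makes the later endpoint argument possible.

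The symmetric bound \eqref{eq:six-bilinear-symmetric} then follows from the symmetry of the form. We have $\Qcal(f,h)=\overline{\Qcal(h,f)}$, and $K$ is even up to conjugation, $K(-x)=\overline{K(x)}$. Applying \eqref{eq:six-bilinear-one-sided} with the roles of $(f,A)$ and $(h,B)$ exchanged gives the second option, and we take the minimum. Finally, \eqref{eq:six-parseval} follows from Cauchy--Schwarz on $P$:
\[
|\Qcal(f,h)|\le\|\widehat f\|_{L^2(d\sigma)}\|\widehat h\|_{L^2(d\sigma)}.
\]
Each factor is controlled by the Parseval bound $\|\widehat g\|_{L^2(P,d\sigma)}\le q^{1/2}\|g\|_{L^2(F^6)}$ recalled in Section \ref{sec:background}, which gives $q^{1/2}|A|^{1/2}\cdot q^{1/2}|B|^{1/2}$.
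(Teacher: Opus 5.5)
Your proposal is correct and follows the paper's proof essentially line by line. Both arguments use the split $(d\sigma)^\vee=\delta_0+K$, H\"older with exponents $\tfrac43$ and $4$, Lemma \ref{lem:slice} with $d=6$, the bound $\|(\phi d\sigma)^\vee\|_4\le q^{-7/2}\Lambda(E)^{1/4}$ together with \eqref{eq:six-energy}, H\"older over the at most $q$ slices, and Cauchy--Schwarz with the Parseval bound for \eqref{eq:six-parseval}. Your normalization bookkeeping, $q^{-20}\cdot q^{6}=q^{-14}$ and hence $q^{5/2}\cdot q^{-7/2}=q^{-1}$, matches the paper's. The remark that $K(-x)=\overline{K(x)}$ is harmless but unnecessary, since $\Qcal(f,h)=\overline{\Qcal(h,f)}$ already gives the symmetric bound.
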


\begin{proof}
Since $(d\sigma)^\vee=\delta_0+K$, the contribution of $\delta_0$ to $\Qcal(f,h)=\langle f,h*(d\sigma)^\vee\rangle$ is $\langle f,h\rangle$, of modulus at most $|A\cap B|$. For the contribution of $K$, H\"older's inequality with exponents $\tfrac43$ and $4$ gives
\begin{equation}\label{eq:six-holder}
|\langle f,h*K\rangle|\le\|f\|_{4/3}\,\|h*K\|_4\le|A|^{3/4}\,\|h*K\|_4.
\end{equation}
To estimate $\|h*K\|_4$ we use the slice estimate of Lemma \ref{lem:slice} with $d=6$. For $z\in F$ let $h_z(\eta)=h(\eta,z)$ for $\eta\in F^5$, supported on $B_z=\{\eta\in F^5:(\eta,z)\in B\}$, and let $\widetilde h_z$ be its lift to the paraboloid, defined by $\widetilde h_z(\eta,\eta\cdot\eta)=h_z(\eta)$ on its support
\[
\widetilde B_z=\{(\eta,\eta\cdot\eta):\eta\in B_z\}\subset P,
\]
so that $|\widetilde B_z|=|B_z|$ and $\sum_z|B_z|=|B|$. Lemma \ref{lem:slice} gives
\begin{equation}\label{eq:six-slice}
\|h*K\|_4\lesssim q^{5/2}\sum_z\big\|(\widetilde h_zd\sigma)^\vee\big\|_4.
\end{equation}
For any function $\phi:P\to\C$ with $|\phi|\le1_E$ for some set $E\subset P$, expanding the fourth power and using Plancherel on $F^6$ identifies $\|(\phi d\sigma)^\vee\|_4^4$ with a normalized weighted count of additive quadruples in $E$, the quadruple $(\xi_1,\xi_2,\xi_3,\xi_4)$ with $\xi_1+\xi_2=\xi_3+\xi_4$ carrying the weight $\phi(\xi_1)\phi(\xi_2)\overline{\phi(\xi_3)\phi(\xi_4)}$. Since $|\phi|\le1_E$, each weight has modulus at most $1$, so this is at most the unweighted additive energy $\Lambda(E)$, whence
\begin{equation}\label{eq:six-L4}
\big\|(\phi d\sigma)^\vee\big\|_4\le q^{-7/2}\Lambda(E)^{1/4}.
\end{equation}
Applying the energy estimate \eqref{eq:six-energy} to $E=\widetilde B_z$ and $\phi=\widetilde h_z$ gives $\Lambda(\widetilde B_z)^{1/4}\lesssim q^{-1/4}|B_z|^{3/4}+q^{1/2}|B_z|^{1/2}$, so combining \eqref{eq:six-slice} and \eqref{eq:six-L4},
\begin{equation}\label{eq:six-hK}
\|h*K\|_4\lesssim q^{-1}\sum_z\Lambda(\widetilde B_z)^{1/4}\lesssim q^{-5/4}\sum_z|B_z|^{3/4}+q^{-1/2}\sum_z|B_z|^{1/2}.
\end{equation}
There are at most $q$ nonempty slices, so H\"older's inequality gives $\sum_z|B_z|^{3/4}\le q^{1/4}|B|^{3/4}$ and $\sum_z|B_z|^{1/2}\le q^{1/2}|B|^{1/2}$, and the right side of \eqref{eq:six-hK} reduces to
\begin{equation}\label{eq:six-hK2}
\|h*K\|_4\lesssim q^{-1}|B|^{3/4}+|B|^{1/2}.
\end{equation}
Substituting \eqref{eq:six-hK2} into \eqref{eq:six-holder} gives \eqref{eq:six-bilinear-one-sided}, and interchanging the roles of $f$ and $h$ and keeping the smaller of the two bounds gives \eqref{eq:six-bilinear-symmetric}. Finally, by Plancherel on $F^6$,
\[
\|\widehat f\|_{L^2(P,d\sigma)}^2=q^{-5}\sum_{\xi\in P}|\widehat f(\xi)|^2\le q^{-5}\sum_{\xi\in F^6}|\widehat f(\xi)|^2=q\|f\|_2^2,
\]
so the Cauchy-Schwarz inequality $|\Qcal(f,h)|\le\|\widehat f\|_{L^2(P,d\sigma)}\|\widehat h\|_{L^2(P,d\sigma)}$ gives \eqref{eq:six-parseval}.
\end{proof}

We now prove the endpoint. By duality, $R^*(2\to\frac83)\lesssim1$ is equivalent to $\|\widehat g\|_{L^2(P,d\sigma)}\lesssim\|g\|_{L^{8/5}(F^6)}$, the exponent $\frac85$ being conjugate to $\frac83$. Normalize $\sum_x|g(x)|^{8/5}=1$, so that $|g|\le1$, and decompose $g$ into dyadic level sets $G_i=\{x:2^{-i-1}<|g(x)|\le2^{-i}\}$ for $i\ge0$, writing $g_i=g1_{G_i}=2^{-i}h_i$ with $|h_i|\le1_{G_i}$. Set
\[
a_i=2^{-8i/5}|G_i|.
\]
Then $\sum_i a_i\lesssim1$, and in particular $a_i\lesssim1$ for every $i$. We will use the identity
\[
|G_i|=2^{8i/5}a_i
\]
each time a cardinality $|G_i|$ is rewritten below. Expanding the square gives
\begin{equation}\label{eq:six-double-sum}
\|\widehat g\|_{L^2(P,d\sigma)}^2=\sum_{i,j\ge0}2^{-i-j}\Qcal(h_i,h_j)\le\sum_{i,j\ge0}2^{-i-j}|\Qcal(h_i,h_j)|.
\end{equation}
Inserting the symmetric bound \eqref{eq:six-bilinear-symmetric}, with $A=G_i$ and $B=G_j$, into \eqref{eq:six-double-sum} gives the three terms
\begin{equation}\label{eq:six-three}
\begin{aligned}
\sum_{i,j\ge0}2^{-i-j}|G_i\cap G_j|
&+q^{-1}\sum_{i,j\ge0}2^{-i-j}|G_i|^{3/4}|G_j|^{3/4}\\
&+\sum_{i,j\ge0}2^{-i-j}\min\big\{|G_i|^{3/4}|G_j|^{1/2},|G_i|^{1/2}|G_j|^{3/4}\big\}.
\end{aligned}
\end{equation}
We will bound the three displayed terms separately. The first and third terms can be directly summed. The middle term is more subtle at the endpoint, and we will also use the Parseval estimate \eqref{eq:six-parseval} for each pair of dyadic levels.

The first term contributes only when $i=j$, since the level sets $G_i$ are disjoint. Thus
\begin{equation}\label{eq:six-diag}
\sum_{i,j\ge0}2^{-i-j}|G_i\cap G_j|=\sum_i2^{-2i}|G_i|=\sum_i2^{-2i}(2^{8i/5}a_i)=\sum_i2^{-2i/5}a_i\le\sum_i a_i\lesssim1.
\end{equation}

Next consider the last term in \eqref{eq:six-three}. Suppose first that $i\le j$, and use the first term of the minimum. This gives us:
\[
\begin{aligned}
2^{-i-j}|G_i|^{3/4}|G_j|^{1/2}
&=2^{-i-j}(2^{8i/5}a_i)^{3/4}(2^{8j/5}a_j)^{1/2}\\
&=2^{-(j-i)/5}a_i^{3/4}a_j^{1/2}\\
&\lesssim 2^{-(j-i)/5}a_i^{1/2}a_j^{1/2},
\end{aligned}
\]
where the last inequality uses $a_i\lesssim1$. The case $j\le i$ is identical, using the second term of the minimum. Therefore the total contribution of the last term in \eqref{eq:six-three} is at most
\[
\sum_{i,j\ge0}2^{-|i-j|/5}a_i^{1/2}a_j^{1/2}.
\]
Applying $a_i^{1/2}a_j^{1/2}\le\frac12(a_i+a_j)$ and using the symmetry in $i$ and $j$ gives
\begin{equation}\label{eq:six-asym}
\sum_{i,j\ge0}2^{-|i-j|/5}a_i^{1/2}a_j^{1/2}\le\sum_{i,j\ge0}2^{-|i-j|/5}a_i=\sum_{i\ge0}a_i\sum_{j\ge0}2^{-|i-j|/5}\lesssim\sum_i a_i\lesssim1.
\end{equation}

It remains to estimate the middle term in \eqref{eq:six-three}. For a fixed pair $(i,j)$ we have:
\[
\begin{aligned}
q^{-1}2^{-i-j}|G_i|^{3/4}|G_j|^{3/4}
&=q^{-1}2^{-i-j}(2^{8i/5}a_i)^{3/4}(2^{8j/5}a_j)^{3/4}=q^{-1}2^{(i+j)/5}a_i^{3/4}a_j^{3/4}\\
&\lesssim q^{-1}2^{(i+j)/5}a_i^{1/2}a_j^{1/2}.
\end{aligned}
\]
This estimate grows with $i+j$, so it cannot be summed directly. For the same pair $(i,j)$, however, the crude Parseval estimate \eqref{eq:six-parseval} gives
\[
2^{-i-j}|\Qcal(h_i,h_j)|\lesssim q2^{-i-j}|G_i|^{1/2}|G_j|^{1/2}.
\]
Again substituting the cardinalities and then simplifying, the right-hand side is
\[
\begin{aligned}
q2^{-i-j}|G_i|^{1/2}|G_j|^{1/2}
&=q2^{-i-j}(2^{8i/5}a_i)^{1/2}(2^{8j/5}a_j)^{1/2}=q2^{-(i+j)/5}a_i^{1/2}a_j^{1/2}.
\end{aligned}
\]
More precisely, for each pair $(i,j)$ the bound coming from \eqref{eq:six-bilinear-symmetric} and the Parseval bound \eqref{eq:six-parseval} imply that the contribution of this pair is bounded by the diagonal and asymmetric contributions from \eqref{eq:six-three}, plus the minimum of the middle estimate above and the Parseval estimate. Thus the total contribution of the middle term is bounded by
\[
\sum_{i,j\ge0}a_i^{1/2}a_j^{1/2}\min\big\{q^{-1}2^{(i+j)/5},q2^{-(i+j)/5}\big\}.
\]
Using again $a_i^{1/2}a_j^{1/2}\le\frac12(a_i+a_j)$ and the symmetry in $i$ and $j$, this is at most
\[
\sum_{i,j\ge0}a_i\min\big\{q^{-1}2^{(i+j)/5},q2^{-(i+j)/5}\big\}.
\]
For each fixed $i$, the inner sum over $j$ is bounded by the full one-dimensional sum
\[
\sum_{n\ge0}\min\big\{q^{-1}2^{n/5},q2^{-n/5}\big\}.
\]
Let $n_0$ be an integer satisfying $2^{n_0/5}\le q<2^{(n_0+1)/5}$. Then
\[
\begin{aligned}
\sum_{n\ge0}\min\big\{q^{-1}2^{n/5},q2^{-n/5}\big\}
&\le q^{-1}\sum_{0\le n\le n_0}2^{n/5}+q\sum_{n>n_0}2^{-n/5}\\
&\lesssim q^{-1}2^{n_0/5}+q2^{-(n_0+1)/5}\lesssim1.
\end{aligned}
\]
Therefore
\begin{equation}\label{eq:six-middle}
\sum_{i,j\ge0}a_i\min\big\{q^{-1}2^{(i+j)/5},q2^{-(i+j)/5}\big\}\lesssim\sum_i a_i\lesssim1.
\end{equation}

Combining \eqref{eq:six-diag}, \eqref{eq:six-asym}, and \eqref{eq:six-middle} gives $\|\widehat g\|_{L^2(P,d\sigma)}^2\lesssim1$. Since we normalized $\|g\|_{L^{8/5}(F^6)}=1$, this proves $\|\widehat g\|_{L^2(P,d\sigma)}\lesssim\|g\|_{L^{8/5}(F^6)}$. By duality, this is equivalent to $R^*(2\to\frac83)\lesssim1$, which proves Theorem \ref{thm:six-main}.

\section{A transference observation for lattice \texorpdfstring{$\Lambda(p)$}{Lambda(p)} estimates}\label{sec:lambda}

We close with an observation that connects the three-dimensional endpoint conjecture with the analogous problem in lattices. This connection is not particularly deep, but seems to have gone unnoticed in the prior literature.  

Consider first the integer paraboloid $\Gamma=\{(n_1,n_2,n_1^2+n_2^2):n_1,n_2\in\Z\}\subset\Z^3$, and suppose the endpoint finite field estimate
\begin{equation}\label{eq:ff-endpoint-three}
\|(f d\sigma)^\vee\|_{L^3(\F_p^3)}\lesssim\|f\|_{L^2(P_3,d\sigma)}
\end{equation}
holds uniformly for the three-dimensional paraboloid over all sufficiently large primes $p$ in a congruence class for which $-1$ is not a square. Let $S\subset\Z^2$ be finite, let $(a_n)_{n\in S}$ be complex coefficients, and choose primes $p\to\infty$ so large that reduction modulo $p$ is injective on $\{(n_1,n_2,n_1^2+n_2^2):n\in S\}$. Writing \eqref{eq:ff-endpoint-three} in unnormalized form, and noting that the normalization cancels exactly because $|P_3|\sim p^2$ while the ambient space has size $p^3$, gives, with $e_p(t)=e^{2\pi it/p}$ the standard additive character of $\F_p$,
\[
\Big(p^{-3}\sum_{x\in\F_p^3}\Big|\sum_{n\in S}a_n e_p(x_1n_1+x_2n_2+x_3(n_1^2+n_2^2))\Big|^3\Big)^{1/3}\lesssim\Big(\sum_{n\in S}|a_n|^2\Big)^{1/2}.
\]
Letting $p\to\infty$, the left-hand side converges to the Riemann integral over $\T^3=(\mathbb R/\Z)^3$ of the corresponding trigonometric sum, in which $e(t)=e^{2\pi it}$, so the finite field endpoint would imply
\begin{equation}\label{eq:integer-paraboloid-lambda3}
\Big\|\sum_{n\in S}a_n e(n_1t_1+n_2t_2+(n_1^2+n_2^2)t_3)\Big\|_{L^3(\T^3)}\lesssim\Big(\sum_{n\in S}|a_n|^2\Big)^{1/2},
\end{equation}
with a constant independent of the finite set $S$. Since the constant is independent of the finite set $S$, the extension operator initially defined on finitely supported coefficient sequences on $\Gamma$ is bounded from $\ell^2(\Gamma)$ to $L^3(\T^3)$. By the bounded linear extension theorem it extends to all of $\ell^2(\Gamma)$, and Plancherel identifies the extension with the corresponding Fourier series. Thus the full integer paraboloid $\Gamma$ is a $\Lambda(3)$ set. 

The same argument applies to spheres. Let $S_R=\{n\in\Z^3:n_1^2+n_2^2+n_3^2=R\}$ be a lattice sphere, and suppose the finite field endpoint holds uniformly for non-degenerate spheres in $\F_p^3$. If $R=0$ there is nothing to prove, so assume $R\ne0$. For a fixed finite subset of $S_R$, choosing primes $p$ so large that reduction is injective and $R$ remains a non-degenerate radius modulo $p$, and passing to the limit, gives
\[
\Big\|\sum_{n\in S_R}a_n e(n\cdot t)\Big\|_{L^3(\T^3)}\lesssim\Big(\sum_{n\in S_R}|a_n|^2\Big)^{1/2},
\]
uniformly in the radius and in the finite support. Thus each fixed lattice sphere $S_R$ would be a $\Lambda(3)$ set, with a constant independent of $R$.

This phenomenon is special to ambient dimension three. If $V\subset\F_q^d$ has size $\sim q^{d-1}$ and $R^*(2\to r)\lesssim1$, then the corresponding normalized exponential-sum estimate, with $e_q$ the canonical additive character of $\F_q$, reads
\begin{equation}\label{eq:general-lambda-loss}
\Big(q^{-d}\sum_{x\in\F_q^d}\Big|\sum_{\xi\in V}a_\xi e_q(x\cdot\xi)\Big|^r\Big)^{1/r}\lesssim q^{(d-1)/2-d/r}\Big(\sum_{\xi\in V}|a_\xi|^2\Big)^{1/2},
\end{equation}
with no power of $q$ exactly when $r=\frac{2d}{d-1}$, which for $d=3$ is the conjectured finite field endpoint $r=3$. In higher dimensions the finite field $L^2$ endpoint does not coincide with $\frac{2d}{d-1}$, so a power of $q$ remains; the passage to a loss-free $\Lambda(3)$ estimate is thus particular to $d=3$.

The exponent $3$ obtained here falls short of the full conjecture. The discrete restriction conjecture of Bourgain \cite{BouLattice} predicts that the integer paraboloid is a $\Lambda(r)$ set for every $r<4$, and that each lattice sphere is a $\Lambda(r)$ set for every $r<6$, the wider range for the sphere reflecting the greater arithmetic sparsity of the discrete sphere \cite{Bou,BouSurvey}. For the paraboloid the exponent $r=4$ is critical, so the estimate obtained here is subcritical. The critical paraboloid exponent is known up to a loss of $N^\eps$ in the size $N$ of the support, by the $\ell^2$ decoupling theorem of Bourgain and Demeter \cite{BD}, which gives $r=4$; the corresponding endpoint $r=6$ for the sphere remains open even with such a loss. Removing this $N^\eps$ loss, so as to reach any subcritical exponent without it, appears to require substantial new ideas, and no loss-free $\Lambda(r)$ estimate is known for any surface for any $r>2$ unless it is implied by $r=4$; see \cite{BouSurvey} for a related discussion.

\section*{Statement on AI usage}

The proof presented here is due to the author. As noted in the footnote in the introduction, the basic approach used in the proof was already known to the author when the earlier paper was written and was mentioned there, but it was not pursued because of confusion over the dominant term. The author used ChatGPT to help organize the argument and identify the cutoffs used to optimize the estimate. The author also used ChatGPT, Gemini, and Claude for assistance with formatting, copyediting, and proofreading.

\end{document}